\definecolor{labelkey}{rgb}{0.9451,0.2706,0.4941}
       \def\@makefnmark{%
               \leavevmode
               \raise.9ex\hbox{\check@mathfonts
                       \fontsize\sf@size\z@\normalfont%
                               \@thefnmark}%
       }
\newcommand{\ep}{\varepsilon}
\newcommand{\vnorm}[1]{|\hspace{-0.3mm}|\hspace{-0.3mm}|#1|\hspace{-0.3mm}|\hspace{-0.3mm}|}
\newcommand{\dual}[1]{\langle{#1}\rangle}
\newcommand{\dualV}[1]{\dual{#1}_{V',V}}
\newcommand{\dualW}[1]{\dual{#1}_{W',W}}
\newfont{\bg}{cmr10 scaled\magstep2}
\newcommand{\bigzerou}{\smash{\lower1.7ex\hbox{\bg 0}}}
\newcommand{\dst}[1]{\operatorname{dist}#1}
\newcommand{\ball}[2]{\operatorname{B}_{#1}(#2)}
\newcommand{\nullsp}{\mathcal{N}}
\newcommand{\ran}{\mathcal{R}}
\newcommand{\domain}{\mathcal{D}}
\newcommand\defeqn{\stackrel{\textrm{\scriptsize def.}}{=}}
\definecolor{MyBlue}{RGB}{15,82,188} 
\definecolor{mygreen}{cmyk}{0.64,0.00,0.95,0.40}
\begin{document}

\theoremstyle{plain}
\newtheorem{theorem}{Theorem}
\newtheorem{corollary}[theorem]{Corollary}
\newtheorem{lemma}[theorem]{Lemma}
\theoremstyle{definition}
\newtheorem{dfn}[theorem]{Definition}
\newtheorem{example}[theorem]{Example}
\newtheorem{remark}[theorem]{Remark}
\newtheorem{fact}[theorem]{Fact}

\title[BNB theorem and Kato's minimum modulus]{Notes on the Banach--Ne{\v c}as--Babu{\v s}ka theorem and
Kato's minimum modulus of operators}
\author[N. Saito]{Norikazu Saito}
\address{Graduate School of Mathematical Sciences, The University of
Tokyo, Komaba 3-8-1, Meguro, Tokyo 153-8914, Japan}
\email{norikazu@g.ecc.u-tokyo.ac.jp}
\urladdr{http://www.infsup.jp/saito/index-e.html}
\date{\today}

\subjclass[2000]{
Primary~
65-01, 	
Secondary~
35K90
}
\keywords{Banach--Ne{\v c}as--Babu{\v s}ka theorem, minimum modulus,
closed range theorem}
\maketitle

\begin{abstract}
 This note was prepared for a lecture given at Kyoto University (RIMS Workshop: ``The State of the Art in Numerical Analysis:
 Theory, Methods, and Applications'', November 8--10, 2017). 
That lecture described the variational analysis of the
 discontinuous Galerkin time-stepping method for parabolic equations
 based on an earlier paper by the author \cite{sai17}.
I also presented the Banach--Ne{\v
 c}as--Babu{\v s}ka (BNB) Theorem or the Babu{\v s}ka--Lax--Milgram
 (BLM) Theorem as the key theorem of our analysis.
 For proof of the BNB theorem, it is useful to
 introduce the minimum modulus of operators by T. Kato.
This note presents a review of the proofs of Closed Range Theorem and BNB
 Theorem following the idea of Kato. Moreover, I present an application
 to BNB theorem to parabolic equations. The
 well-posedness is proved by BNB theorem. 
 This note is not an original research paper. It includes no new results. 
This is a revised manuscript and several incorrect descriptions in the original version are fixed.     
\end{abstract}



\addtocounter{section}{-1}
\section{Notation}
\label{sec:00}

All functions and function spaces in this note are real-valued. 

Letting $X$ be a Banach space with the norm denoted as $\|\cdot\|_X$, then
the dual space of $X$, say, the set of all linear bounded functional
defined on $X$ is denoted by $X'$. For $\varphi\in X'$, we write
$\varphi(x)=\dual{\varphi,x}_{X',X}=\dual{x,\varphi}_{X',X}$ and call it the duality
pairing between $X'$ and $X$.
The norm of $X'$ is defined as 
\[
 \|\varphi\|_{X'}\defeqn \sup_{x\in X}\frac{\dual{\varphi,x}_{X',X}}{\|x\|_X}\qquad
 (\varphi\in X').
\]
It is well known that $X'$ forms a Banach space equipped with the norm
$\|\cdot\|_{X'}$. 

Letting $Y$ be a (possibly another) Banach space, the set of all bounded
bilinear forms on $X\times Y$ is designated as $\mathcal{B}(X,Y)$. That
is, if $b\in\mathcal{B}(X,Y)$, then $b(\cdot,y)$ is a linear functional on $X$ for a fixed $y\in Y$, then
  $b(x,\cdot)$ is a linear functional on $Y$ for a fixed $x\in X$, and
 \[
  \|b\|\defeqn \sup_{x\in X,y\in Y}\frac{b(x,y)}{\|x\|_X\|y\|_Y}<\infty.
 \]

 For a subset $M$ of $X$, we set
  \begin{equation*}
    M^\bot=\{f\in X'\mid \dual{f,x}_{X',X}=0\ (\forall x\in M)\},
  \end{equation*}
  which is called the {annihilator} of $M$.  
  The space $M^\bot$ is a closed subspace of $X'$.
  We write 
  \begin{equation*}
   \dst_{X}(x,M)=\inf_{z\in M}\|x-z\|_X\qquad (x\in X).
  \end{equation*}
  
Let $T$ be an operator from $X$ into $Y$ with its domain
$\mathcal{D}(T)\subset X$. $\nullsp (T)=\{x\in \mathcal{D}(T)\mid
Tx=0\}$ is the null space (kernel)
of $T$ and $\ran (T)=\{Tx\in Y\mid x\in \mathcal{D}(T)\}$ is the range
(image) of $T$. $\nullsp (T)$ is a closed subspace of $X$ and $\ran(T)$ is a
subspace of $Y$. The set of all bounded linear operators of $X\to Y$ with
their domain $X$ is
denoted by $\mathcal{L}(X,Y)$: if $T\in \mathcal{L}(X,Y)$, then $T$
is a linear operator of $X\to Y$ with $\mathcal{D}(T)=X$ and
\[
 \|T\|_{X,Y}\defeqn\sup_{x\in X}\frac{\|Tx\|_Y}{\|x\|_X}<\infty,
\]
which is called the operator norm of $T$.

\section{Introduction}
 \label{sec:0}

\subsection{Banach--Ne{\v c}as--Babu{\v s}ka Theorem}
\label{sec:1.1}
The present note presents specific examination of the following theorem called
the
Banach--Ne{\v c}as--Babu{\v s}ka (BNB) Theorem or the Babu{\v s}ka--Lax--Milgram
(BLM) Theorem. 

 \begin{theorem}
  \label{th:bnb}
Letting $V$ be a Banach space and letting $W$ be a reflexive Banach space, 
then, for any $a\in \mathcal{B}(V,W)$, 
  the following \textup{(i)}--\textup{(iii)} are equivalent.
\begin{itemize}
 \item[\textup{(i)}] For any $L\in W'$, there exists a unique $u\in V$
	      such that   
\begin{equation}
		 \label{eq:1}
a(u,w)=\dualW{L,w}\qquad (\forall w\in W).
		\end{equation}
 \item[\textup{(ii)}] 
	      \begin{subequations} 
 \label{eq:bnb}
\begin{gather}
\exists \beta>0,\quad \inf_{v\in V}\sup_{w\in W}\frac{a(v,w)}{\|v\|_{V}\|w\|_{W}}= \beta ; \label{eq:bnb1}\\
w\in W,\quad (\forall v\in V,\ a(v,w)=0)\quad
 \Longrightarrow \quad (w=0). \label{eq:bnb2}
\end{gather}
	      \end{subequations}
 \item[\textup{(iii)}] 
\begin{equation}
 \label{eq:bnb3}
  \exists \beta_1,\beta_2>0,\quad
  \inf_{v\in V}\sup_{w\in W}\frac{a(v,w)}{\|v\|_{V}\|w\|_{W}}= \beta_1,\  
  \inf_{w\in W}\sup_{v\in V}\frac{a(v,w)}{\|v\|_{V}\|w\|_{W}}= \beta_2.
\end{equation}
\end{itemize}
 \end{theorem}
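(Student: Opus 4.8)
The plan is to reduce everything to statements about the bounded linear operator $A\in\lop(V,W')$ canonically attached to $a$ by $\dualW{Av,w}=a(v,w)$, and about its transpose. Since $W$ is reflexive we may identify $W''=W$, so the transpose can be regarded as an operator $A^{T}\in\lop(W,V')$ characterised by $\dualV{A^{T}w,v}=a(v,w)$. Evaluating the inner suprema as dual norms,
\[
\sup_{w\in W}\frac{a(v,w)}{\|w\|_{W}}=\|Av\|_{W'},\qquad
\sup_{v\in V}\frac{a(v,w)}{\|v\|_{V}}=\|A^{T}w\|_{V'},
\]
one obtains the dictionary: statement (i) says that $A$ is a bijection of $V$ onto $W'$ (existence $=$ surjectivity, uniqueness $=$ injectivity); the first inf--sup constant in \eqref{eq:bnb1} and \eqref{eq:bnb3} equals $\inf_{v\neq0}\|Av\|_{W'}/\|v\|_{V}$, which, once positive (hence forcing $\nullsp(A)=\{0\}$), is precisely Kato's minimum modulus $\gamma(A)$; the second inf--sup constant in \eqref{eq:bnb3} equals $\inf_{w\neq0}\|A^{T}w\|_{V'}/\|w\|_{W}$, i.e.\ $\gamma(A^{T})$ when $\nullsp(A^{T})=\{0\}$; and condition \eqref{eq:bnb2} says exactly $\nullsp(A^{T})=\{0\}$. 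I would then run the cycle (i) $\Rightarrow$ (iii) $\Rightarrow$ (ii) $\Rightarrow$ (i), which establishes all three equivalences.

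For (i) $\Rightarrow$ (iii): if $A$ is bijective, the bounded inverse theorem gives $A^{-1}\in\lop(W',V)$, so $\|Av\|_{W'}\ge\|A^{-1}\|_{W',V}^{-1}\|v\|_{V}$ and the first inf--sup is positive; moreover the transpose of an isomorphism of Banach spaces is an isomorphism, so $A^{T}\colon W\to V'$ is bounded below and the second inf--sup is positive. The step (iii) $\Rightarrow$ (ii) is immediate, since \eqref{eq:bnb3} literally contains \eqref{eq:bnb1}, and positivity of the second inf--sup makes $A^{T}$ injective, which is \eqref{eq:bnb2}.

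The substantive implication is (ii) $\Rightarrow$ (i), and this is where Kato's minimum modulus and the Closed Range Theorem do the work. Condition \eqref{eq:bnb1} gives $\|Av\|_{W'}\ge\beta\|v\|_{V}$, so $\nullsp(A)=\{0\}$ and $\gamma(A)\ge\beta>0$; hence $\ran(A)$ is closed (directly: if $Av_{n}\to\psi$ in $W'$ then $(v_{n})$ is Cauchy because $\|v_{n}-v_{m}\|_{V}\le\beta^{-1}\|Av_{n}-Av_{m}\|_{W'}$, and its limit $v$ satisfies $Av=\psi$). With $\ran(A)$ closed, the Closed Range Theorem yields $\ran(A)=\nullsp(A^{T})^{\bot}$, the annihilator being formed in $W'=(W)'$ after identifying $\nullsp(A^{T})$ with a closed subspace of $W$ via reflexivity; since $\nullsp(A^{T})=\{0\}$ by \eqref{eq:bnb2}, this annihilator is all of $W'$, so $A$ is onto. (One can also bypass the Closed Range Theorem at this point: were $\ran(A)\subsetneq W'$, Hahn--Banach would produce $0\neq\Phi\in W''=W$ vanishing on $\ran(A)$, whence $a(v,\Phi)=\dualW{Av,\Phi}=0$ for all $v\in V$, contradicting \eqref{eq:bnb2}.) Thus $A$ is a bijection of $V$ onto $W'$, i.e.\ (i) holds.

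The main obstacle is precisely this last implication, and its delicate point is the use of reflexivity of $W$: it is what lets the cokernel of $A$ be detected inside $W$ itself, so that the ``transversality'' hypothesis \eqref{eq:bnb2}, phrased over $W$, is strong enough to upgrade ``closed range'' to ``surjective''. Everything else -- the identification of suprema with dual norms, the bounded inverse theorem, transposition of isomorphisms -- is routine; and the symmetry between the two inf--sup conditions in (iii) is explained conceptually by the identity $\gamma(A)=\gamma(A^{T})$ together with the chain ``$\gamma(A)>0\iff\ran(A)$ closed $\iff\ran(A^{T})$ closed'' from Kato's theory.
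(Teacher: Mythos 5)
Your proposal is correct, but it follows a genuinely lighter route than the paper. The paper proves the result through its operator reformulation (Theorem \ref{th:5.1}) and deliberately routes everything through Kato's apparatus: the reduced minimum modulus, Theorem \ref{th:2.1} ($\ran(A)$ closed $\Leftrightarrow\gamma(A)>0$), the hard identity $\gamma(A)=\gamma(A')$ of Theorem \ref{th:2.2} (whose proof needs the Hahn--Banach separation Lemma \ref{la:bh1} and the open-mapping-type Lemma \ref{la:aaa} of the Appendix), and the Closed Range Theorem (Corollary \ref{th:2.10}); it then closes the two chains (i)$\Leftrightarrow$(iii) and (ii)$\Leftrightarrow$(iii). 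You instead run the cycle (i)$\Rightarrow$(iii)$\Rightarrow$(ii)$\Rightarrow$(i) and never need $\gamma(A)=\gamma(A')$: for (i)$\Rightarrow$(iii) you use the bounded inverse theorem and the fact that the dual of an isomorphism is an isomorphism (with $W''=W$); for (ii)$\Rightarrow$(i) you get closedness of $\ran(A)$ directly from the lower bound $\|Av\|_{W'}\ge\beta\|v\|_V$ by a Cauchy-sequence argument (this is one direction of \eqref{eq:f1}), and surjectivity by Hahn--Banach plus reflexivity --- your ``bypass'' is in effect the paper's Theorem \ref{th:2.4a} specialized to $\nullsp(A')=\{0\}$, proved without invoking Theorem \ref{th:2.2}. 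This is sound because Theorem \ref{th:bnb}(iii) only asserts positivity of the two inf--sup constants; what your shortcut does not deliver, and what the paper's heavier machinery buys, is the quantitative conclusion $\mu(A)=\mu(A')$ (i.e.\ $\beta_1=\beta_2=\beta$ of Remark \ref{rem:bnb0}) together with the full Closed Range Theorem, which is precisely the expository point of the note. One cosmetic slip: what you call ``Kato's minimum modulus $\gamma(A)$'' is, in the paper's terminology, the minimum modulus $\mu(A)$; it coincides with the reduced minimum modulus $\gamma(A)$ only once $\nullsp(A)=\{0\}$, as you in fact note.
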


   \begin{remark}
If \eqref{eq:bnb3} is satisfied, then we have $\beta_1=\beta_2$. Moreover, the value of $\beta$ in \eqref{eq:bnb1} agrees with $\beta_1=\beta_2$ in \eqref{eq:bnb3}.
  \label{rem:bnb0}
   \end{remark}

 \begin{remark}
Condition \eqref{eq:bnb1} is expressed equivalently as
\[
\exists \beta>0,\quad \sup_{w\in W}\frac{a(v,w)}{\|w\|_{W}}\ge
  \beta \|v\|_V  \quad (\forall v\in V).
\]  
Usually, \eqref{eq:bnb1} is called the \emph{Bab{\v s}ka--Brezzi} condition or the \emph{inf--sup} condition. 
  \label{rem:bnb00}
 \end{remark}

  \begin{remark}
Condition \eqref{eq:bnb2} is expressed equivalently as
\[
\sup_{v\in V}|a(v,w)|>0\quad (\forall w\in W,w\ne 0).
\]  
  \label{rem:bnb00b}
 \end{remark}

 \begin{remark}
The solution $u\in V$ of \eqref{eq:1}
  satisfies
  \[
   \|u\|_V \le \frac{1}{\beta}\|L\|_{W'}
  \]
in view of \eqref{eq:bnb1} and Remark \ref{rem:bnb00}.   
  \label{rem:bnb1}
 \end{remark}

   \begin{remark}
If $V$ and $W$ are finite-dimensional and $\dim V=\dim W$, then \eqref{eq:bnb1}
    implies \eqref{eq:bnb2}. See \cite[Proposition 2.21]{eg04}.
  \label{rem:fd}
   \end{remark}

Theorem \ref{th:bnb} might be understood as a generalization of the following
fundamental result, called the {Lax--Milgram Theorem}.
 
 \begin{theorem}
  \label{th:lm}
  Letting $a\in \mathcal{B}(V,V)$, where $V$ is a Hilbert space, then we assume
  that a positive constant $\alpha$ exists such that 
  \begin{equation}
   a(v,v)\ge \alpha \|v\|_V^2.
		 \label{eq:lm1}
  \end{equation}
  Then, for any $L\in V'$, there exists a unique $u\in V$ such that
  \begin{equation*}
a(u,w)=\dualV{L,w}\qquad (\forall w\in V).
\end{equation*}
 \end{theorem}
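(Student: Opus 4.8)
\textbf{Proof proposal for Theorem \ref{th:lm} (Lax--Milgram).}
The plan is to deduce the Lax--Milgram Theorem directly from the BNB Theorem (Theorem \ref{th:bnb}) applied with $W=V$. Since every Hilbert space is reflexive, the hypotheses of Theorem \ref{th:bnb} are met once we check that $a\in\mathcal{B}(V,V)$, which is part of the assumption. The strategy is therefore to verify condition (ii) of Theorem \ref{th:bnb}, namely the inf--sup condition \eqref{eq:bnb1} together with the nondegeneracy condition \eqref{eq:bnb2}, and then invoke the equivalence (ii)$\Rightarrow$(i) to obtain existence and uniqueness of $u\in V$ solving $a(u,w)=\dualV{L,w}$ for all $w\in V$.

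The key computation is elementary. Fix $v\in V$ with $v\neq 0$. Using $w=v$ as a test function in the supremum defining the left-hand side of \eqref{eq:bnb1} and invoking coercivity \eqref{eq:lm1}, we get
\[
\sup_{w\in V}\frac{a(v,w)}{\|w\|_V}\ \ge\ \frac{a(v,v)}{\|v\|_V}\ \ge\ \frac{\alpha\|v\|_V^2}{\|v\|_V}\ =\ \alpha\|v\|_V,
\]
so that $\inf_{v\in V}\sup_{w\in V}\frac{a(v,w)}{\|v\|_V\|w\|_V}\ge \alpha>0$; in particular the infimum is a positive number $\beta$, which is exactly \eqref{eq:bnb1} (as reformulated in Remark \ref{rem:bnb00}). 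For \eqref{eq:bnb2}, suppose $w\in V$ satisfies $a(v,w)=0$ for all $v\in V$; choosing $v=w$ and using \eqref{eq:lm1} gives $0=a(w,w)\ge \alpha\|w\|_V^2$, hence $\|w\|_V=0$, i.e. $w=0$. Thus both parts of condition (ii) hold.

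With condition (ii) established, Theorem \ref{th:bnb} yields that for every $L\in V'$ there is a unique $u\in V$ with $a(u,w)=\dualV{L,w}$ for all $w\in V$, which is precisely the assertion of the Lax--Milgram Theorem. (As a byproduct, Remark \ref{rem:bnb1} furnishes the a priori bound $\|u\|_V\le \alpha^{-1}\|L\|_{V'}$, since the $\beta$ in \eqref{eq:bnb1} is at least $\alpha$.) There is essentially no obstacle here beyond correctly matching the hypotheses: the only point requiring a word of care is that Theorem \ref{th:bnb} is stated with a general reflexive $W$, so one must note explicitly that Hilbert spaces are reflexive in order to apply it with $W=V$; the rest is the two-line coercivity argument above.
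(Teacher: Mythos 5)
Your proof is correct and follows essentially the same route as the paper, which simply observes that with $W=V$ the coercivity condition \eqref{eq:lm1} implies the hypotheses of Theorem \ref{th:bnb}, so that Theorem \ref{th:lm} is a corollary of the BNB theorem. The only cosmetic difference is that you verify condition (ii) (inf--sup plus nondegeneracy), while the paper remarks that coercivity gives condition (iii); both verifications are the same two-line coercivity computation.
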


 This theorem was presented in \cite[theorem 2.1]{lm54}; the special
 case was presented earlier in \cite{vis51}. It is
 interesting that the main aim of \cite{lm54} is to resolve higher order
 parabolic equations by Hille--Yosida's semigroup theory. It is
 described in \cite{lm54} that  
  \begin{quote}
\emph{The following theorem is a mild generalization of the Fr{\' e}chet--Riesz
 Theorem on the representation of bounded linear functionals in Hilbert
 space. }[page 168]
 \end{quote}

%

The condition \eqref{eq:lm1} is usually called the coercivity
condition. If $W=V$, then \eqref{eq:lm1} implies \eqref{eq:bnb3};
Theorem \ref{th:lm} is a corollary of Theorem \ref{th:bnb}.   

Theorem \ref{th:bnb} has a long history.
 \begin{itemize}
  \item In 1962, Ne{\v c}as \cite[Th{\' e}or{\`e}me 3.1]{nec62} proved that
      part ``(iii) $\Rightarrow$ (i)'' for the Hilbert case (i.e., the case where both $V$ and $W$
      are Hilbert spaces) as a simple generalization of the Lax--Milgram
     theorem. Ne{\v c}as described that\footnote{In quotations below, we have adapted reference numbers
      for the list of references of this paper. }
\begin{quote}
\emph{Consid{\' e}rant les espaces complexes et les op{\' e}rateurs diff{\'
 e}rentiels elliptiques, le th{\' e}or{\` e}me de P. D. Lax and
 A. Milgram (cf. p. ex. L. Nirenberg \cite{nir55}) para{\^ i}t {\^ e}tre tr{\`
 e}s utile pour la m{\' e}thode variationnelle
 d'abord nous en signlons une g{\' e}n{\' e}ralisation facile.} [page 318]
\end{quote}
He also described that (see \cite[Th{\' e}or{\`e}me 3.2]{nec62})
\eqref{eq:bnb1} and
\[
  \ran(A)\mbox{ is dense in }W'
\]
implies (i) for the Hilbert case, where $A$ denotes the associating
	operator with $a(\cdot,\cdot)$; see \eqref{eq:a1} for the
	definition. Later, in 1967, Ne{\v c}as \cite[Th{\' e}or{\`e}me 6-3.1]{nec67} proved
      that \eqref{eq:bnb1} and
\[
  \exists c>0,\quad
  \sup_{v\in V}\frac{a(v,w)}{\|v\|_{V}}\ge c \|w\|_{Z}\quad (w\in W)
\]
implies (i) for the Hilbert case, where $Z$ denotes a Banach space such
      that $W\subset Z$ (algebraically and topologically). See also
	\cite{nec12}. I infer that
      Ne{\v c}as noticed the part ``(ii) $\Rightarrow$
	(i)''.
\item In 1968, Hayden \cite[Theorem 1]{hay68} proved that 
\[
  \eqref{eq:bnb1}\mbox{ and  }\nullsp(A)=\nullsp (A')=\{0\} \quad \Leftrightarrow\quad \mbox{(i) }
\]
for the Banach case, where $A'$ denotes the dual operator of
      $A$; see \eqref{eq:a2} for the
	definition. 
     
\item In 1971, Babu{\v s}ka \cite[theorem 2.1]{bab71} stated the part ``(iii)
      $\Rightarrow$ (i)'' for the Hilbert case. Babu{\v s}ka
      described that \footnote{However, I was unable to find where proof of
      the theorem was given in \cite{nir55}.}
      \begin{quote}
       \emph{The proof is adapted from Ne{\v c}as \cite{nec62} and
       Nirenberg \cite{nir55}. We present this proof because we shall
       use a portion of it for proof of the next theorem.} [page 323] 
      \end{quote}
      Later, Babu{\v s}ka--Aziz \cite[Theorem 5.2.1]{ba72} stated in 1972 the part ``(ii) $\Rightarrow$
     (i)'' for the Hilbert case. It is
      described that
      \begin{quote}
       \emph{This theorem is a generalization of the well known
       Lax--Milgram theorem. The theorem might be generalized easily to
       the case where $H_1$ and $H_2$ are reflexive Banach spaces. The
       method proof is an adaptation from \cite{nec62} and \cite{nir55}
       (see also Necas \cite{nec67}, p.294).} [page 116] 
      \end{quote}
  \item In 1972, Simader \cite[Theorem 5.4]{sim72} presented the part ``(iii) $\Rightarrow$ (i)'' for $V=W^{m,p}_0(\Omega)$ and
	$W=W^{m,q}_0(\Omega)$, where $\Omega\subset \mathbb{R}^n$ is a bounded smooth domain, $1<p,q<\infty$,
	$\frac{1}{p}+\frac{1}{q}=1$ and $1\le m\in\mathbb{Z}$. The proof
	could be applied to the general reflexive Banach spaces $V$ and
	$W$. It is noteworthy that \cite{sim72} is essentially an
	English translation of his dissertation in 1968. 
  \item In 1974, Brezzi \cite[Corollary 0.1]{bre74} proved the part
      ``(i)$\Leftrightarrow$(iii)''for the Hilbert case. 
It is
      described that
      \begin{quote}
       \emph{the results contained in theorem 0.1 and in corollary 0.1
       are of classical type and that they might not be new. For instance part
       I)$\Rightarrow$III) of corollary 0.1 was used by Babu{\v s}ka
       \cite{bab71}. } [page 132]
      \end{quote}
\item In 1989, Ro{\c s}ca \cite[Theorem 3]{ros89} proved the part
      ``(i)$\Leftrightarrow$(ii)'' for the Banach case and called it the
      Babu{\v s}ka--Lax--Milgram theorem\footnote{In the article ``Babuska--Lax--Milgram theorem'' in \emph{Encyclopedia of
      Mathematics} (\url{http://www.encyclopediaofmath.org/}), the part
      ``(i)$\Leftrightarrow$(ii)'' of Theorem \ref{th:bnb} is called the Babuska--Lax--Milgram
	Theorem. (This article was written by I. Ro{\c s}ca.)
      }.
  \item
       In 2002, Ern and Guermond presented the part
      ``(i)$\Leftrightarrow$(ii)'' as \emph{Theorem of {Ne{\v
      c}as}} in their monograph
      \cite[\S 3.2]{eg02}. Later, they named the part
      ``(i)$\Leftrightarrow$(ii)'' the {Banach--Ne{\v
      c}as--Babu{\v s}ka Theorem} in an expanded version
       of \cite{eg02}; see \cite[\S 2.1]{eg04}. 
       It is described in \cite{eg04} that 
	\begin{quote}
\emph{The BNB Theorem plays a fundamental role in this book. Although it is by
no means standard, we have adopted the terminology ``BNB Theorem'' because
the result is presented in the form below was first stated by Ne{\v c}as in
1962 \cite{nec62} and popularized by Babuska in 1972 in the context of finite
element methods \cite[p. 112]{ba72}. From a functional analysis perspective,
this theorem is a rephrasing of two fundamental results by Banach: the Closed
Range Theorem and the Open Mapping Theorem.} [page 84]
	\end{quote}	      
  \item I could find no explicit reference to the part ``(ii) $\Leftrightarrow$
	(iii)''. However, it is known among specialists.  
 \end{itemize}

As for the naming of Theorem \ref{th:bnb}, I follow conventions in \cite{eg04}.

\subsection{Operator version of Theorem \ref{th:bnb}}
\label{sec:1.2}
To elucidate Theorem \ref{th:bnb} more deeply, it is useful to reformulate it using
operators. Below, supposing that $V$, $W$, and $a$ are those described in
Theorem \ref{th:bnb}, unless otherwise stated explicitly, then we introduce $A\in\mathcal{L}(V,W')$ as
\begin{equation}
 \label{eq:a1}
  a(v,w)=\dualW{Av,w}\qquad (v\in V,~w\in W).
\end{equation}
Then, (i) of Theorem \ref{th:bnb} is interpreted as ``the operator
$A:V\to W'$ is bijective''.
The \emph{dual (adjoint) operator} $A':W\to V'$ of $A$ is defined as 
\begin{equation}
 \label{eq:a2}
  \dualW{Av,w}=\dualV{v,A'w}\qquad (v\in V,~w\in W).
\end{equation}
Then we have $A'\in \mathcal{L}(W,V')$.  
We introduce 
\begin{equation}
\label{eq:2.2w2}
 \mu(A) =\inf_{v\in V} \frac{\|Av\|_{W'}}{\|{v}\|_{{V}}}\quad\mbox{and}\quad 
 \mu(A') =\inf_{w\in W} \frac{\|A'w\|_{V'}}{\|{w}\|_{{W}}}, 
 \end{equation}
which we will call the \emph{minimum modulus} of operators (see
Definition \ref{dfn:1}). 

Because 
 \begin{align*}
\mu(A)&=\inf_{v\in V}\frac{\|Av\|_{W'}}{\|v\|_V}=
\inf_{v\in V}\frac{1}{\|v\|_{V}}\sup_{w\in W}\frac{\dualW{Av,w}}{\|w\|_{W}}=
\inf_{v\in V}\sup_{w\in W}\frac{a(v,w)}{\|v\|_{V}\|w\|_{W}},\\
  \mu(A')&=
  \inf_{w\in W}\frac{\|A'w\|_{V'}}{\|w\|_{W}}
  \inf_{w\in W}\frac{1}{\|w\|_W}\sup_{v\in V}\frac{\dualW{v,A'w}}{\|v\|_{V}}=
  \inf_{w\in W}\sup_{v\in V}\frac{a(v,w)}{\|v\|_{V}\|w\|_{W}},
 \end{align*}
 we have
\[
 \eqref{eq:bnb1}\ \Leftrightarrow \ \mu(A)>0
,\qquad \eqref{eq:bnb3}\ \Leftrightarrow \ \mu(A)=\mu(A')>0
\] 
 
 Moreover,
\[
\eqref{eq:bnb2}
\ \Leftrightarrow \ \nullsp (A')=\{0\}.
\]

Consequently, Theorem \ref{th:bnb} is equivalent to the following
theorem in view of \eqref{eq:a1}.

\begin{theorem}
 \label{th:5.1}
Letting $V$ be a Banach space and letting $W$ be a reflexive Banach space,
then for any $A\in\mathcal{L}(V,W')$, the following \textup{(i)}--\textup{(iii)} are equivalent: 
\begin{itemize}
 \item[\textup{(i)}] $A$ is a bijective operator of $V\to W'$;
 \item[\textup{(ii)}] $\mu(A)>0$ and $\nullsp(A')=\{0\}$; and
 \item[\textup{(iii)}] $\mu(A)=\mu(A')>0$.
\end{itemize}
In those expressions, $A'\in\mathcal{L}(W,V')$ denotes the dual operator of $A$ defined
 as \eqref{eq:a2}.
\end{theorem}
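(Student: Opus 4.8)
The plan is to establish the cycle of implications $\textup{(i)}\Rightarrow\textup{(iii)}\Rightarrow\textup{(ii)}\Rightarrow\textup{(i)}$, relying on three standard tools of functional analysis: the bounded inverse theorem, the identity $\|B\|=\|B'\|$ for a bounded operator, and the Closed Range Theorem. The basic fact about the minimum modulus that I use repeatedly is that, because $V$ is complete, $\mu(A)>0$ holds if and only if $A$ is injective with $\ran(A)$ closed: indeed $\|Av\|_{W'}\ge\mu(A)\|v\|_V$ means $A$ is bounded below, hence injective with closed range by a Cauchy-sequence argument, and conversely, if $A$ is injective with closed range, then $A\colon V\to\ran(A)$ is a bounded bijection between Banach spaces, so its inverse is bounded and $\mu(A)>0$.

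For $\textup{(i)}\Rightarrow\textup{(iii)}$: if $A$ is bijective, the bounded inverse theorem gives $A^{-1}\in\lop(W',V)$, and the substitution $\varphi=Av$ yields $\mu(A)=\inf_{v\in V}\|Av\|_{W'}/\|v\|_V=1/\|A^{-1}\|_{W',V}>0$. Since $A$ is bijective, so is $A'$: taking adjoints in $A A^{-1}=I_{W'}$ and $A^{-1}A=I_V$, and identifying $W''=W$ by reflexivity of $W$, gives $(A')^{-1}=(A^{-1})'$. Hence
\[
\mu(A')=\frac{1}{\|(A')^{-1}\|_{V',W}}=\frac{1}{\|(A^{-1})'\|_{V',W}}=\frac{1}{\|A^{-1}\|_{W',V}}=\mu(A)>0.
\]
The implication $\textup{(iii)}\Rightarrow\textup{(ii)}$ is then immediate, because $\mu(A')>0$ forces $A'$ to be injective, i.e.\ $\nullsp(A')=\{0\}$, while $\mu(A)>0$ is already part of \textup{(ii)}.

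It remains to prove $\textup{(ii)}\Rightarrow\textup{(i)}$, the one substantive step. From $\mu(A)>0$, $A$ is injective and $\ran(A)$ is closed in $W'$, so it suffices to show $\ran(A)=W'$. If not, then $\ran(A)$ is a proper closed subspace of $W'$, and the Hahn--Banach theorem furnishes a nonzero continuous functional on $W'$ vanishing on $\ran(A)$; by reflexivity of $W$ this functional is represented by some $w\in W$ with $w\ne0$ and $\dualW{Av,w}=0$ for all $v\in V$. Then $\dualV{v,A'w}=0$ for all $v\in V$, so $A'w=0$, contradicting $\nullsp(A')=\{0\}$. (Equivalently: since $\ran(A)$ is closed, the Closed Range Theorem gives $\ran(A)=\nullsp(A')^\bot=W'$.) Hence $A$ is bijective. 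Finally, Theorem~\ref{th:bnb} is recovered from Theorem~\ref{th:5.1} through the identification \eqref{eq:a1} and the two equalities $\mu(A)=\inf_{v\in V}\sup_{w\in W}a(v,w)/(\|v\|_V\|w\|_W)$ and $\mu(A')=\inf_{w\in W}\sup_{v\in V}a(v,w)/(\|v\|_V\|w\|_W)$ established in \S\ref{sec:1.2}; in particular the constant $\beta$ of \eqref{eq:bnb1} equals $\mu(A)=1/\|A^{-1}\|_{W',V}$.

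I expect the main obstacle to be exactly the passage in $\textup{(ii)}\Rightarrow\textup{(i)}$ from injectivity of the adjoint $A'$ to surjectivity of $A$: this requires the closedness of $\ran(A)$ (from $\mu(A)>0$ and completeness of $V$) \emph{and} the reflexivity of $W$ working together. Without a closed range, a dense proper range is possible; without reflexivity, the Hahn--Banach/annihilator argument does not close up. Packaging precisely this duality is the content of the Closed Range Theorem, which is why, following Kato, one first develops the minimum modulus and proves that theorem, and then reads off Theorem~\ref{th:5.1} with little extra work.
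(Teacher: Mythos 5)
Your proposal is correct, and it reaches the conclusion by a genuinely different route than the paper. The paper deliberately channels everything through Kato's reduced minimum modulus: its chain ``(i) $\Rightarrow$ (iii) $\Rightarrow$ (i)'' and ``(ii) $\Leftrightarrow$ (iii)'' repeatedly invokes $\gamma(A)$, the equivalence ``$\ran(A)$ closed $\Leftrightarrow\gamma(A)>0$'' (Theorem \ref{th:2.1}), the identity $\gamma(A)=\gamma(A')$ (Theorem \ref{th:2.2}), and the full Closed Range Theorem (Corollary \ref{th:2.10}), the point being to showcase that machinery. You instead run the cycle (i) $\Rightarrow$ (iii) $\Rightarrow$ (ii) $\Rightarrow$ (i) with more elementary ingredients: the characterization ``$\mu(A)>0\Leftrightarrow A$ injective with closed range'' (which is \eqref{eq:f1}, an Open Mapping/Closed Graph consequence), the bounded inverse theorem plus $\|(A^{-1})'\|=\|A^{-1}\|$ and $(A')^{-1}=(A^{-1})'$ (using $W''=W$) for (i) $\Rightarrow$ (iii) --- this is exactly the remark at the end of \S\ref{sec:1.2} that the paper states but does not use in its formal proof --- and, for the substantive step (ii) $\Rightarrow$ (i), a single Hahn--Banach separation combined with reflexivity (essentially the argument of Theorem \ref{th:2.4a} specialized to $\nullsp(A')=\{0\}$), or equivalently just the one inclusion $\ran(A)=\nullsp(A')^\bot$ for closed range. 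What your route buys is a shorter, self-contained proof of Theorem \ref{th:5.1} that never needs $\gamma$ or $\gamma(A)=\gamma(A')$; what the paper's route buys is the general quantitative identity $\gamma(A)=\gamma(A')$ (valid beyond the bijective case, and behind Remark \ref{rem:bnb0}) together with the Closed Range Theorem itself, which is the stated purpose of the note. Your identification of the exact role of closedness of $\ran(A)$ and of reflexivity in (ii) $\Rightarrow$ (i) is accurate.
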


As explained clearly in \cite[\S A.2]{eg04}, the proof of Theorem
\ref{th:5.1} is an application of 
\begin{itemize}
 \item Open Mapping Theorem (or Closed Graph Theorem),
 \item Closed Range Theorem.
\end{itemize}
In fact, in view of Open Mapping Theorem (or Closed Graph Theorem), it
can be shown that
\begin{equation}
 \mu(A)>0\quad \Leftrightarrow\quad \nullsp(A)=\{0\}\mbox{ and
  }\ran(A)\mbox{ is closed}.
 \label{eq:f1}
\end{equation}
Then, combining this with Closed Range Theorem, we can prove Theorem
\ref{th:5.1}. Particularly if $\nullsp(A)=\{0\}$ and
$\nullsp(A')=\{0\}$, then $A^{-1}$ and $(A')^{-1}$ exist and
 \[
  \mu(A)=\|A^{-1}\|_{W',V},\qquad \mu(A')=\|(A')^{-1}\|_{V',W}.
 \] 
Therefore, $\mu(A)=\mu(A')$ is nothing but the standard fact of
 \[
  \|A^{-1}\|_{W',V}=\|(A')^{-1}\|_{V',W}.
 \]

\subsection{Remarks on Closed Range Theorem}
\label{sec:1.3}
In standard textbooks of numerical analysis, we use Closed Range Theorem
without proof. The proof is left as a \emph{black box}. However, in my
opinion, it is worth knowing how to prove Closed Range Theorem for
researchers of numerical analysis. 
I would like to offer an approach. I recommend introducing the following quantities:
\begin{equation}
\label{eq:2.2z2}
 \gamma(A)=
\inf_{v\in V}
\frac{\|Av\|_{W'}}{\dst_V(v,\nullsp(A))}\quad\mbox{and}\quad 
 \gamma(A')=
\inf_{w\in W} \frac{\|A'w\|_{V'}}{\dst_W(w,\nullsp(A'))}
\end{equation}
instead of $\mu(A)$ and $\mu(A')$.
Following Kato
 \cite{kat95}, we call this quantity
 $\gamma(A)$ the \emph{reduced minimum modulus} of $A$ (see Definition \ref{dfn:1}). 
 Indeed, we can prove: 
 \begin{itemize}
  \item 
  $\ran(A)$ is closed $\Leftrightarrow$ $\gamma(A)>0$ (Theorem \ref{th:2.1} below);
\item $\gamma(A)=\gamma(A')$ (Theorem \ref{th:2.2} below).  
 \end{itemize}
Particularly $\ran(A)$ is closed if and only if $\ran(A')$ is
closed. This is a part of Closed Range Theorem. These are classical
results by Kato \cite{kat58}. The main objective of \cite{kat58} is to
develop the perturbation theory for eigenvalue problems of linear
operators.
To accomplish this main objective, Kato studied $\gamma(A)$ and gave the
proof of Closed Range Theorem for closed (possibly unbounded)
operators. (The original theorem by S. Banach was formulated and proved
for bounded operators; see \cite[Theorems X.8, X.9]{ban87}.)       

Kato's proof of \cite{kat58} was later generalized in \cite{kat95}. A
simple explanation can be found in \cite{bre11}.

It is noteworthy that the introduction of $\gamma(A)$ was not originally Kato's
idea. Many researchers have introduced the same quantity. However,
Kato realized the importance of this quantity and developed his theory
using it as a key tool. R.~G. Bartle stated in \texttt{Mathematical
Review} that 
\begin{quote}
\emph{
The author introduces a constant $\gamma(A)$, called the lower bound of
 $A$, which is defined to be the supremum of all numbers $\gamma\geq 0$
 such that $\|Ax\|\geq\gamma\|\tilde x\|$, $x\in D(A)$, where $\tilde x$
 is the coset $x+N(A)$ and $\|\tilde x\|$ denotes the usual factor space
 norm in $X/N(A)$. Others have considered this constant before [see the
 reviewer's note, Ann. Acad. Sci. Fenn. Ser A. I. no. 257 (1958);
 MR0104172], but this reviewer is not aware of any previous systematic
 use of $\gamma(A)$.
 }[MR0107819]
\end{quote}

I believe that Kato's proof includes an idea full of suggestion for the
study of numerical analysis and that it is worthy of study for
researchers of numerical analysis.

\subsection{Application of Theorem \ref{th:bnb}}
\label{sec:1.5}
Ne{\v c}as originally established Theorem \ref{th:bnb}, the part
``(iii)$\Rightarrow$(i)'' to deduce the well-posedness (the unique
existence of a solution with a priori estimate) of
higher-order elliptic equations in weighted Sobolev spaces.   
However, Theorem \ref{th:bnb} plays a crucial role in the
theory of the finite element method.
Pioneering work was done for error analysis of elliptic problems (see
\cite{bab71}, \cite{ba72}). Moreover, active applications for the mixed finite
element method are well-known:
see \cite{bf91}, \cite{bbf13} and \cite{eg04} for
systematic study. Another important application is the well-posedness of
parabolic equations (see \cite[\S 6]{eg04} for
example). Although this later application is apparently unfamiliar, it is
actually useful for studying the discontinuous Galerkin time-stepping method, as reported recently in \cite{sai17}.

\subsection{Purpose and contents}
\label{sec:1.7}

This note has a dual purpose. 
The first is to review Kato's proof of Closed Range
Theorem using $\gamma(A)$ and to state the proof of Theorems \ref{th:bnb}
and \ref{th:5.1}. 
The second is to present the proof of the well-posedness of parabolic
equations using Theorem \ref{th:bnb}. To clarify the variational
characteristics of the method of analysis, we consider abstract
evolution equations of parabolic type, where the coefficient might
depend on the time.

\smallskip

The contents of this note are the following:  

\medskip

\begin{tabular}{lcl}
\ref{sec:00}. && Notations\\
\ref{sec:0}.  && Introduction\\
\ref{sec:1}.  && Preliminaries\\
\ref{sec:2}.  && Kato's minimum modulus of operators\\
\ref{sec:5}. && Proof of Theorems \ref{th:bnb} and \ref{th:5.1}\\
\ref{sec:7}. && {Application to evolution equations of parabolic type}\\
\ref{sec:a}. && {Proof of ``\eqref{eq:pf10} $\Rightarrow$
 \eqref{eq:pf11}''}\\
\ref{sec:b}.  && {Comments on the revised version}
\end{tabular}

\medskip

This note was prepared for the lecture given at Kyoto University (RIMS Workshop: ``The State of the Art in Numerical Analysis:
 Theory, Methods, and Applications'', November 8--10, 2017). This note
 is not an original research paper and includes no new results. This is a revised manuscript and several incorrect descriptions in the original version are fixed.


\section{Preliminaries}
\label{sec:1}

We recall two fundamental results, Closed Graph
Theorem and Hahn--Banach Theorem together with their consequences.  
Throughout this section, $X$ and $Y$ are assumed to be Banach spaces.

 
\begin{lemma}[Closed Graph Theorem]
Let $T$ be a linear operator of $X\to Y$. If $X=\domain (T)$ and  
 \begin{equation}
  \mbox{$\domain (T)$ is complete under the norm }
   \|x\|_{\domain(T)}=\|x\|_X+\|Tx\|_Y,
   \label{eq:graph} 
\end{equation}
then we have $T\in\mathcal{L}(X,Y)$. 
\label{la:cgt}
\end{lemma}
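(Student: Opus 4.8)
The plan is to reduce the statement to the Open Mapping Theorem (equivalently, the Bounded Inverse Theorem), which is the only non-elementary ingredient. Write $X_T$ for the vector space $\domain(T)$ equipped with the graph norm $\|x\|_{\domain(T)}=\|x\|_X+\|Tx\|_Y$; by hypothesis \eqref{eq:graph} this is a Banach space, and since $\domain(T)=X$, the identity map $j\colon X_T\to X$, $jx=x$, is a well-defined linear bijection between two Banach spaces. Everything then hinges on showing that $j^{-1}$ is bounded.

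First I would observe that $j$ is bounded: $\|jx\|_X=\|x\|_X\le\|x\|_X+\|Tx\|_Y=\|x\|_{\domain(T)}$, so $\|j\|\le 1$. Since $j$ is a continuous linear bijection of Banach spaces, the Open Mapping Theorem (or, directly, the Bounded Inverse Theorem) gives that $j^{-1}\colon X\to X_T$ is also bounded; that is, there is a constant $C\ge 1$ with $\|x\|_X+\|Tx\|_Y=\|x\|_{\domain(T)}\le C\|x\|_X$ for every $x\in X$. In particular $\|Tx\|_Y\le C\|x\|_X$ for all $x\in X$, which is exactly the assertion $T\in\mathcal{L}(X,Y)$, with the bound $\|T\|_{X,Y}\le C$.

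An essentially equivalent route, which makes the name ``Closed Graph Theorem'' more transparent, is to work with the graph $\gra(T)=\{(x,Tx)\mid x\in X\}\subset X\times Y$: condition \eqref{eq:graph} says precisely that $\gra(T)$, as a subspace of the Banach space $X\times Y$ with norm $\|(x,y)\|=\|x\|_X+\|y\|_Y$, is complete, hence closed, hence itself a Banach space. The first coordinate projection $\pi_1\colon\gra(T)\to X$ is then a continuous linear bijection, so $\pi_1^{-1}$ is continuous by the Bounded Inverse Theorem, and $T=\pi_2\circ\pi_1^{-1}$ is continuous as a composition of continuous maps.

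The only real obstacle is that both arguments rest on the Open Mapping Theorem, and hence on the Baire category theorem; there is no elementary proof avoiding Baire. Since the note treats this as a recalled fundamental result, I would simply invoke the Open Mapping Theorem (or its corollary, the Bounded Inverse Theorem) rather than redevelop it here; a self-contained treatment would first have to prove that a surjective bounded operator between Banach spaces is open, via the standard two-step Baire argument (the image of a ball is dense in a ball, then upgrade this to actually containing a smaller ball by a convergent successive-approximation series).
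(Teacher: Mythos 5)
Your proof is correct. Note, however, that the paper itself states Lemma \ref{la:cgt} without proof, as a recalled fundamental result (the black box it later combines with the Closed Range Theorem), so there is no internal proof to compare against; your reduction is the standard one and is sound. Both of your routes are fine: the identity map $j\colon X_T\to X$ is a bounded bijection between Banach spaces (completeness of $X_T$ is exactly hypothesis \eqref{eq:graph}, and $\domain(T)=X$ gives surjectivity), so the Bounded Inverse Theorem yields $\|x\|_X+\|Tx\|_Y\le C\|x\|_X$, i.e.\ $T\in\mathcal{L}(X,Y)$; the graph-projection variant is an isometric reformulation of the same argument. You are also right that the only non-elementary ingredient is the Open Mapping Theorem, hence Baire category. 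It is worth observing that the note already contains half of a self-contained proof: Lemma \ref{la:aaa} in Appendix \ref{sec:a} is precisely the ``upgrade'' step $\ball{Y}{r}\subset\overline{T\ball{X}{1}}\Rightarrow\ball{Y}{r}\subset T\ball{X}{1}$ for closed operators, so a reader wishing to unpack the black box would only need to add the Baire density step (surjectivity, or here bijectivity of $j$, forces $\overline{j\ball{X_T}{1}}$ to contain a ball) and then invoke that lemma.
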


An operator $T$ satisfying \eqref{eq:graph} is called a \emph{closed operator}
   and $\|x\|_{\domain (T)}$ is called the \emph{graph norm} of $T$. Closed Graph Theorem is also described as ``a closed linear operator from $X$ into $Y$ with $\domain (T)=X$ is bounded''. 
    Because $X$ and $Y$ are Banach spaces, 
    \eqref{eq:graph} is equivalent to
     \begin{equation}
      \begin{array}{ll}
       x_n\in \domain (T),& \\
	 x_n\to x \in X\mbox{ in }X& (n\to \infty)
      \end{array}
      \quad \Longrightarrow \quad x\in \domain (T).
   \label{eq:graph2} 
\end{equation}
A bounded operator is a closed operator; \eqref{eq:graph} and
\eqref{eq:graph2} are satisfied for $T\in \mathcal{L}(X,Y)$. In fact, $\|x\|_X$ and $\|x\|_{\domain
   (T)}$ are equivalent norms of $X$, because $\|x\|_{\domain(T)}=\|x\|_X+\|Tx\|_Y\le
   (1+\|T\|_{X,Y})\|x\|_X$. Therefore, $X$ is complete under $\|x\|_{\domain
   (T)}$, which implies \eqref{eq:graph}. 

Let us consider a linear operator $T:X\to Y$ such that $\nullsp(T)=\{0\}$.
     Then, the inverse operator $T^{-1}:\ran (T)\to X$ can be defined. 
     If $T^{-1}$ is bounded, then
     \begin{equation}
 \begin{array}{l}
  y_n\in \domain (T^{-1})=\ran (T),  \\
        y_n\to y \in Y\mbox{ in }Y \  (n\to \infty)
 \end{array}\quad \Longrightarrow \quad y\in \ran (T),
   \label{eq:graph3} 
     \end{equation}
 as just mentioned above.
In other words, $\ran(T)$ is a closed set in $Y$ if $T^{-1}$ is bounded. On the other
     hand, if $\ran(T)$ is closed, \eqref{eq:graph3} is
     satisfied. Therefore, we can apply Closed Graph Theorem to
     conclude that $T^{-1}\in\mathcal{L}(\ran(T),X)$. As a result, we obtain the following lemma.

\begin{lemma}
 \label{la:closed1}
 Let $T$ be a linear operator from $X$ into $Y$ such that $\nullsp(T)=\{0\}$.
Then, we have $T^{-1}\in \mathcal{L}(\ran(T),X)$ if and only if $\ran(T)$ is closed. 
\end{lemma}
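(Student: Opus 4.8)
\emph{Proof proposal.}
The plan is to obtain both implications from the Closed Graph Theorem (Lemma~\ref{la:cgt}) applied to the inverse map $T^{-1}\colon\ran(T)\to X$, which is well defined and linear precisely because $\nullsp(T)=\{0\}$ and which has domain $\domain(T^{-1})=\ran(T)$. For the implication ``$T^{-1}\in\mathcal{L}(\ran(T),X)\ \Rightarrow\ \ran(T)$ is closed'' I would note that, a bounded operator being a closed one, the condition \eqref{eq:graph2} applied to $T^{-1}$ is exactly \eqref{eq:graph3}: if $y_n\in\ran(T)$ and $y_n\to y$ in $Y$, then $y\in\ran(T)$. Explicitly, $x_n:=T^{-1}y_n$ is Cauchy in the Banach space $X$ since $\|x_n-x_m\|_X\le\|T^{-1}\|\,\|y_n-y_m\|_Y$, so $x_n\to x$ for some $x\in X$; then $Tx=y$ (here one uses that $T$ is a closed operator), whence $y\in\ran(T)$. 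Thus $\ran(T)$ is closed in $Y$.

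For the converse I would assume $\ran(T)$ is closed, so that $\ran(T)$, with the norm inherited from $Y$, is itself a Banach space on which the linear operator $T^{-1}$ is everywhere defined. It then remains to verify the hypothesis of Lemma~\ref{la:cgt} for $T^{-1}$, namely that $\ran(T)$ is complete in the graph norm $\|y\|_{\domain(T^{-1})}=\|y\|_Y+\|T^{-1}y\|_X$: a graph-Cauchy sequence $(y_n)$ satisfies $y_n\to y$ in $\ran(T)$ (closed, hence complete) and $T^{-1}y_n\to x$ in $X$ (complete), and closedness of $T$ yields $Tx=y$, i.e.\ $T^{-1}y=x$. Lemma~\ref{la:cgt} then gives $T^{-1}\in\mathcal{L}(\ran(T),X)$, completing the equivalence.

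The only step that deserves attention is the appeal to Closed Graph Theorem: what must be checked is completeness of $\domain(T^{-1})=\ran(T)$ in the graph norm, and this forces one to use not merely closedness of $\ran(T)$ but also that $T$ itself is a closed operator. This costs nothing in the applications of this note, where $T$ is always bounded (hence closed); with that in hand the proof reduces to the two elementary facts ``a bounded operator satisfies \eqref{eq:graph2}'' and ``a closed subspace of a Banach space is complete''.
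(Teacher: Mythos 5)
Your proposal is correct and takes essentially the same route as the paper: both implications rest on Closed Graph Theorem (Lemma~\ref{la:cgt}) applied to $T^{-1}$, the forward one amounting to the observation that a bounded (hence closed) inverse yields \eqref{eq:graph3}, the converse to applying the theorem on the Banach space $\ran(T)$. Your explicit flag that one must also use closedness of $T$ itself (to get $Tx=y$ in both limit arguments) is apt; the paper leaves this implicit, which is harmless in context since the lemma is only invoked for bounded operators defined on a whole Banach space (and for closed operators, cf.\ Remark~\ref{rem:31}).
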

     
\begin{remark}
 I presented Lemma \ref{la:closed1} as a corollary of Closed Graph
 Theorem. However, S. Banach proved the following proposition (see
 \cite[Theorem X.10]{ban87}): if $\ran(T)$ is closed,
 then there exists a positive constant $m>0$ such that, for any $y\in
 \ran(T)$, we can take $x\in X$ satisfying $y=Tx$ and $\|x\|_X\le m
 \|y\|_Y$.   
\label{rem:c}
\end{remark}


\begin{lemma}[Hahn--Banach Theorem]
\label{th:bh}
Let $E$ be a vector space and let $p$ be a functional on $E$ such
 that
\begin{align*}
 p(\lambda x)&=\lambda p(x) && (x\in E,~\lambda>0);\\
 p(x+y)&\le p(x)+p(y) &&(x,y\in E).
\end{align*}
Suppose that $G$ is a subspace (linear subset) of $E$ and that $g$ is a
 functional on $G$ satisfying
\[
 g(x)\le p(x)\quad (x\in G).
\]
 Then, there exists a functional $\tilde{g}$ on $E$, which is called the
 extension of $g$ into $E$, such that
\[
 \tilde{g}(x)=g(x)\quad (x\in G),\qquad \tilde{g}(x)\le p(x)\quad (x\in E).
\]
\end{lemma}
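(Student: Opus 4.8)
The plan is to combine Zorn's lemma with a one-dimensional extension step. First I would introduce the family $\mathcal{E}$ of all pairs $(H,h)$ in which $H$ is a linear subspace of $E$ with $G\subset H$ and $h$ is a linear functional on $H$ satisfying $h|_{G}=g$ and $h(x)\le p(x)$ for all $x\in H$, ordered by $(H_{1},h_{1})\preceq(H_{2},h_{2})$ whenever $H_{1}\subset H_{2}$ and $h_{2}|_{H_{1}}=h_{1}$. The pair $(G,g)$ belongs to $\mathcal{E}$, so $\mathcal{E}\ne\emptyset$; and given any chain in $\mathcal{E}$, the union of its domains is again a subspace on which the common extension of the $h$'s is well defined (consistency on overlaps is automatic because the chain is totally ordered by extension) and still dominated by $p$, so the chain has an upper bound in $\mathcal{E}$. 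Zorn's lemma then produces a maximal element $(\widetilde{H},\tilde{g})$.

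The next step is to show $\widetilde{H}=E$, which completes the proof with this $\tilde{g}$. Suppose not and pick $x_{0}\in E\setminus\widetilde{H}$; I would try to extend $\tilde{g}$ to $\widetilde{H}\oplus\mathbb{R}x_{0}$ by choosing a value $\alpha:=\tilde{g}(x_{0})$ and setting $\tilde{g}(y+tx_{0})=\tilde{g}(y)+t\alpha$ for $y\in\widetilde{H}$, $t\in\mathbb{R}$. Imposing $\tilde{g}(y+tx_{0})\le p(y+tx_{0})$ for all such $y,t$ reduces, after dividing by $|t|$ and invoking positive homogeneity of $p$, to the single requirement
\[
\sup_{w\in\widetilde{H}}\bigl(\tilde{g}(w)-p(w-x_{0})\bigr)\ \le\ \alpha\ \le\ \inf_{z\in\widetilde{H}}\bigl(p(z+x_{0})-\tilde{g}(z)\bigr).
\]
Such an $\alpha$ exists because for any $w,z\in\widetilde{H}$ subadditivity gives $\tilde{g}(w)+\tilde{g}(z)=\tilde{g}(w+z)\le p(w+z)\le p(w-x_{0})+p(z+x_{0})$, hence $\tilde{g}(w)-p(w-x_{0})\le p(z+x_{0})-\tilde{g}(z)$; so the left-hand supremum does not exceed the right-hand infimum, and any $\alpha$ between them makes the extended functional lie in $\mathcal{E}$ and strictly dominate $(\widetilde{H},\tilde{g})$, contradicting maximality.

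I expect the one-dimensional extension to be the crux: the only place where the hypotheses on $p$ are used in an essential way is in checking that the interval for $\alpha$ is nonempty, which is exactly where positive homogeneity and subadditivity of $p$ enter. The remaining care is set-theoretic—verifying that a chain in $\mathcal{E}$ really has an upper bound in $\mathcal{E}$—and acknowledging that the argument relies on Zorn's lemma (equivalently, the axiom of choice).
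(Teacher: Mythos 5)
Your proof is correct and complete: the Zorn's-lemma set-up (pairs $(H,h)$ ordered by extension, upper bounds for chains obtained from unions of the domains), the maximal element, and the one-dimensional extension step in which $\alpha$ is chosen with
$\sup_{w\in\widetilde H}\bigl(\tilde g(w)-p(w-x_0)\bigr)\le\alpha\le\inf_{z\in\widetilde H}\bigl(p(z+x_0)-\tilde g(z)\bigr)$,
the nonemptiness of this interval coming from $\tilde g(w)+\tilde g(z)=\tilde g(w+z)\le p(w+z)\le p(w-x_0)+p(z+x_0)$, is exactly the classical argument. Note, however, that the paper does not prove this lemma at all: Hahn--Banach is recalled in \S\ref{sec:1} as a known fundamental result, and only its consequences (Lemmas \ref{la:bh0}, \ref{la:a.1}, \ref{la:bh1aa} and \ref{la:bh2}) are proved there. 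So there is no in-paper proof to compare against; what you have done is fill in the black box in the standard textbook way (as in, e.g., \cite{bre11}). Two small points are worth making explicit if you write this up: the word ``functional'' must be read as ``linear functional'' for $g$, $h$ and $\tilde g$ (otherwise the identity $\tilde g(w)+\tilde g(z)=\tilde g(w+z)$ and the reduction of the domination condition by dividing by $|t|$, which also uses the positive homogeneity of $p$ only for positive factors, are not available), and in the chain step the linearity of the union functional relies on the fact that any two points of the union of the domains lie in a single member of the chain, which is precisely where the total ordering enters; the case $t=0$ of the extension inequality is already covered by $\tilde g\le p$ on $\widetilde H$.
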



 We present some useful results.
 
 \begin{lemma}
  Let $M$ be a subspace of $X$. Then, every
  $g\in M'$ admits an extension $\tilde{g}\in X'$ such that
  $\|\tilde{g}\|_{X'}=\|g\|_{M'}$. 
\label{la:bh0}
 \end{lemma}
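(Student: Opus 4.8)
The plan is to derive this as a direct corollary of the Hahn--Banach Theorem (Lemma \ref{th:bh}), exactly as the placement right after that lemma suggests. The functional $g \in M'$ is bounded on $M$ with norm $\|g\|_{M'}$, so the natural majorant is $p(x) = \|g\|_{M'}\,\|x\|_X$, defined on all of $E = X$. One checks immediately that $p$ is positively homogeneous ($p(\lambda x) = \lambda p(x)$ for $\lambda > 0$, since $\|\lambda x\|_X = \lambda \|x\|_X$) and subadditive ($p(x+y) \le p(x) + p(y)$, by the triangle inequality for $\|\cdot\|_X$), so $p$ satisfies the hypotheses of Lemma \ref{th:bh}.

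Next I would verify the domination hypothesis $g(x) \le p(x)$ for $x \in M$. This follows from $g(x) \le |g(x)| \le \|g\|_{M'}\,\|x\|_X = p(x)$. Hahn--Banach then produces a linear functional $\tilde g$ on $X$ with $\tilde g|_M = g$ and $\tilde g(x) \le p(x) = \|g\|_{M'}\,\|x\|_X$ for all $x \in X$. The one subtlety is that Lemma \ref{th:bh} only gives the one-sided bound $\tilde g(x) \le p(x)$, whereas to conclude $\tilde g \in X'$ with the right norm I need the two-sided bound $|\tilde g(x)| \le \|g\|_{M'}\,\|x\|_X$. This is handled by applying the bound also to $-x$: $-\tilde g(x) = \tilde g(-x) \le p(-x) = \|g\|_{M'}\,\|{-x}\|_X = \|g\|_{M'}\,\|x\|_X$, so combining the two gives $|\tilde g(x)| \le \|g\|_{M'}\,\|x\|_X$. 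Hence $\tilde g$ is bounded, $\tilde g \in X'$, and $\|\tilde g\|_{X'} \le \|g\|_{M'}$.

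Finally I would establish the reverse inequality to get equality of norms. Since $\tilde g$ extends $g$, the supremum defining $\|\tilde g\|_{X'}$ is taken over a larger set than that defining $\|g\|_{M'}$ (we include all $x \in X$, not just $x \in M$), so $\|\tilde g\|_{X'} = \sup_{x \in X \setminus\{0\}} \tilde g(x)/\|x\|_X \ge \sup_{x \in M \setminus\{0\}} g(x)/\|x\|_X = \|g\|_{M'}$. Together with the previous paragraph this yields $\|\tilde g\|_{X'} = \|g\|_{M'}$, completing the proof.

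There is no real obstacle here: the entire content is packaging the abstract Hahn--Banach Theorem with the specific choice $p = \|g\|_{M'}\|\cdot\|_X$ and then upgrading the one-sided estimate to a genuine norm bound via the $x \mapsto -x$ trick. The only point requiring a moment's care is the implicit assumption that $\|g\|_{M'} < \infty$, i.e. that $g$ is genuinely a bounded functional on $M$ (equipped with the norm inherited from $X$); this is exactly the meaning of $g \in M'$, so it is built into the statement.
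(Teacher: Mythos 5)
Your proof is correct and matches the paper's own argument, which likewise applies the Hahn--Banach Theorem with the majorant $p(x)=\|g\|_{M'}\|x\|_X$; you have simply filled in the routine verifications (the $-x$ trick and the reverse norm inequality) that the paper leaves implicit.
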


 \begin{proof}
Apply Hahn--Banach Theorem with $p(x)=\|g\|_{G'}\|x\|_X$.  
 \end{proof}

\begin{lemma}
\label{la:a.1}
For a subspace $M$ of $X$, we have 
\begin{equation}
 \dst_{X'}(f,M^\bot)=\sup_{x\in
  M}\frac{|\dual{f,x}_{X',X}|}{\|x\|_{X}}\qquad (f\in X').
 \label{eq:a.1}
\end{equation}
\end{lemma}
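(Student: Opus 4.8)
The plan is to prove the two inequalities $\dst_{X'}(f,M^\bot)\le \sup_{x\in M}\frac{|\dual{f,x}_{X',X}|}{\|x\|_X}$ and the reverse separately, exploiting Lemma \ref{la:bh0}. Write $s(f)$ for the right-hand side supremum. First I would establish the easier inequality $s(f)\le \dst_{X'}(f,M^\bot)$: for any $g\in M^\bot$ and any $x\in M$ with $\|x\|_X=1$ we have $\dual{f,x}_{X',X}=\dual{f-g,x}_{X',X}\le \|f-g\|_{X'}$; taking the supremum over such $x$ gives $s(f)\le \|f-g\|_{X'}$, and then the infimum over $g\in M^\bot$ yields $s(f)\le \dst_{X'}(f,M^\bot)$.

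For the reverse inequality I would argue as follows. Consider the restriction $f|_M\in M'$; by definition of the norm on $M'$ we have $\|f|_M\|_{M'}=s(f)$. By Lemma \ref{la:bh0} there is an extension $\tilde g\in X'$ of $f|_M$ with $\|\tilde g\|_{X'}=\|f|_M\|_{M'}=s(f)$. Now observe that $f-\tilde g$ vanishes on $M$, i.e. $f-\tilde g\in M^\bot$, so $\dst_{X'}(f,M^\bot)\le \|f-(f-\tilde g)\|_{X'}=\|\tilde g\|_{X'}=s(f)$. Combining the two inequalities gives \eqref{eq:a.1}.

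One small technical point to handle carefully is the degenerate case $M=\{0\}$ (or more generally when the supremum is over an empty index set because $M$ contains only the zero vector): there the convention $\sup\emptyset = 0$ should be used, and indeed $M^\bot = X'$ so $\dst_{X'}(f,M^\bot)=0$, consistent with the formula. I would mention this briefly rather than dwell on it. The only genuine ingredient is the norm-preserving extension from Lemma \ref{la:bh0}, which is itself a consequence of Hahn--Banach; everything else is a direct unwinding of definitions, so I do not anticipate a real obstacle — the ``hard part,'' such as it is, is simply recognizing that the Hahn--Banach extension of $f|_M$ is exactly the element of $X'$ that realizes the distance to $M^\bot$.
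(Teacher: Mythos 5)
Your proposal is correct and follows essentially the same route as the paper: both directions are proved exactly as in the paper's proof, identifying the supremum with $\|f|_M\|_{M'}$, invoking the norm-preserving extension of Lemma \ref{la:bh0}, and observing that $f-\tilde g\in M^\bot$ realizes the distance, while the easy inequality follows by estimating $|\dual{f-h,x}_{X',X}|$ for $h\in M^\bot$.
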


\begin{proof}
 Letting $f\in X'$, and introducing the restriction $f_M=f|_M:M\to\mathbb{R}$
 of $f$ into $M$, we have $f_M\in M'$. Then, \eqref{eq:a.1} is expressed
 as $\dst_{X'}(f,M^\bot)=\|f_M\|_{M'}$.

 By Lemma \ref{la:bh0},
  an extension $g\in X'$ of $f_M$ exists such that
 $\|g\|_{X'}=\|f_M\|_{M'}$. Set $h=f-g\in X'$. Consequently, we have $h\in
 M^\bot$ because $\dual{f,x}_{X',X}=\dual{g,x}_{X',X}$ for $x\in
 M$, which implies that $\dst_{X'}(f,M^\bot)\le
 \|f-h\|_{X'}=\|g\|_{X'}=\|f_M\|_{M'}$.

 On the other hand, for any $h\in M^\bot$, we have
 $|\dual{f,x}_{X',X}|=|\dual{f-h,x}_{X',X}|\le \|f-h\|_{X'}\|x\|_X$ for
 $x\in M$. Therefore, $\|f_M\|_{M'}\le \dst_{X'}(f,M^\bot)$. 
\end{proof}

\begin{lemma}
Let $C$ be an open convex subset with $0\in C$ of $X$. Supposing that
 $x_0\in X$ and $x_0\not\in C$, then there exists a $\varphi\in X'$ such
 that $\dual{\varphi,x_0}_{X',X}=1$ and $\dual{\varphi,x}_{X',X}<1$ for $x\in C$.  
\label{la:bh1aa}
\end{lemma}

 \begin{proof}
 Setting $G=\{tx_0\mid t\in\mathbb{R}\}$, we introduce a functional $g$
  on $G$ as
 \[
  g(tx_0)=t.
 \] 
 We recall that the gauge of $C$ is given as 
 \[
 p(x)=\inf\{\alpha>0\mid \alpha^{-1}x\in C\}
 \]
and that it satisfies the following:
\begin{subequations} 
 \label{eq:g}
 \begin{align}
& \exists M>0,\ 0\le p(x)\le M\|x\|_X  && (x\in X); \label{eq:g1}\\
& C=\{x\in E\mid p(x)<1\}; &&  \label{eq:g2}\\
& p(\lambda x)= \lambda p(x)  && (x\in X,\lambda>0); \label{eq:g4}\\
& p(x+y)\le p(x)+p(y) && (x,y\in X). \label{eq:g3}
 \end{align}
\end{subequations}
  Then, it is apparent that
 \begin{equation}
  \label{eq:g8}
  g(x)\le p(x)\qquad (x\in G).
 \end{equation}
Indeed, it is trivial if $x=tx_0$ with $t\le 0$. If $x=tx_0$ with $t>0$, then
   $\alpha^{-1}tx_0\in C$ implies $\alpha>t$ because $t'x_0$ with $t'\ge
  1$ cannot belong to $C$. Therefore, \eqref{eq:g8} follows.
  According to Hahn--Banach Theorem, there exists a functional $\varphi$
  defined on $X$ such that
  $\varphi(x)=g(x)$ for $x\in G$ and  
  $\varphi(x)\le p(x)$ for $x\in X$.
Using \eqref{eq:g1}, $\varphi(x)\le p(x)\le M\|x\|_X$ and
  $-\varphi(x)=\varphi(-x)\le p(-x)\le M\|x\|_X$ for $x\in
  X$. Consequently, we have $\varphi\in X'$. 
  Moreover, we have  
  $\varphi(x_0)=\dual{\varphi,x_0}_{X',X}=1$ and $\varphi(x)=\dual{\varphi,x}_{X',X}<1$ for $x\in C$. 
 \end{proof}

We recall the proof of \eqref{eq:g}
 to emphasize that $C$ must be open and convex.
 
 \begin{proof}[Proof of \eqref{eq:g1}]
  Because $C$ is an open set, an $r>0$ exists such that
  $\mathrm{B}_X(0,r)=\{x\in X\mid \|x\|_X<r\}\subset C$. Then,
 $p(x)\le \inf\{\alpha>0\mid \alpha^{-1}x\in
  \mathrm{B}_X(0,r)\}=r^{-1}\|x\|_X$.  
 \end{proof}

  \begin{proof}[Proof of \eqref{eq:g2}]
  First, let $x\in C$. Small $\ep>0$ exists such that $(1+\ep)x\in C$
   because $C$ is open. Therefore, $p(x)\le \frac{1}{1+\ep}<1$. Conversely,
    let $p(x)<1$ for $x\in X$. Then, an $\alpha\in (0,1)$ exists such
   that $\alpha^{-1}x\in C$. Therefore, $x=\alpha
   (\alpha^{-1}x)+(1-\alpha)\cdot 0\in C$ because $C$ is convex. 
 \end{proof}

  \begin{proof}[Proof of \eqref{eq:g4}]
  $p(\lambda x)=\inf\{\beta\lambda>0\mid \beta^{-1}x\in C\}=\lambda
   p(x)$. 
 \end{proof}

  \begin{proof}[Proof of \eqref{eq:g3}]
We apply \eqref{eq:g2} and \eqref{eq:g4}. 
   Letting $\ep>0$ be arbitrary, then we have $x/(p(x)+\ep)\in C$ and
   $y/(p(y)+\ep)\in C$ because
   $p\left(x/(p(x)+\ep)\right)={p(x)}/(p(x)+\ep)<1$. Therefore, 
    for any $t\in [0,1]$,
    \[
    \frac{tx}{p(x)+\ep}+\frac{(1-t)y}{p(y)+\ep}\in C.
   \]
   Choosing $t=(p(x)+\ep)/(p(x)+p(y)+2\ep)$, we obtain
   $(x+y)/(p(x)+p(y)+2\ep)\in C$. This result implies that
   $1>p((x+y)/(p(x)+p(y)+2\ep))=p(x+y)/(p(x)+p(y)+2\ep)$, which means
   that $p(x+y)<p(x)+p(y)+\ep$. Letting $\ep\downarrow 0$, we infer
   $p(x+y)\le p(x)+p(y)$. 
 \end{proof}

 \begin{lemma}
Let $M$ be a closed convex subset with $0\in M$ of $X$. Supposing that
 $x_0\in X$ and $x_0\not\in M$, then there exists a $\varphi\in X'$ such
 that $\dual{\varphi,x_0}_{X',X}>\dual{\varphi,x}_{X',X}$ for $x\in M$.  
\label{la:bh1}
\end{lemma}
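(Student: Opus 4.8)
The plan is to deduce Lemma~\ref{la:bh1} from the open-convex version, Lemma~\ref{la:bh1aa}, by a routine ``fattening'' argument. Since $M$ is closed and $x_0\notin M$, the distance $d=\dst_X(x_0,M)$ is strictly positive. Fix $r$ with $0<r<d$ and set $C=M+\ball{X}{0}{r}=\{z+y\mid z\in M,\ \|y\|_X<r\}$. Then $C$ is convex (a sum of convex sets), open (it is a union of open balls $\ball{X}{z}{r}$ over $z\in M$), and contains $0$ because $0\in M$. Moreover $x_0\notin C$: if $x_0=z+y$ with $z\in M$ and $\|y\|_X<r$, then $\|x_0-z\|_X<r<d$, contradicting $d=\inf_{z\in M}\|x_0-z\|_X$.

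Next I would apply Lemma~\ref{la:bh1aa} to this $C$ and $x_0$. This gives a functional $\varphi\in X'$ with $\dual{\varphi,x_0}_{X',X}=1$ and $\dual{\varphi,x}_{X',X}<1$ for all $x\in C$. In particular, for every $z\in M$ and every $y$ with $\|y\|_X<r$ we have $z+y\in C$, hence
\[
\dual{\varphi,z}_{X',X}+\dual{\varphi,y}_{X',X}<1.
\]
Taking the supremum over $\|y\|_X<r$ yields $\dual{\varphi,z}_{X',X}+r\|\varphi\|_{X'}\le 1$ for all $z\in M$. Since $\varphi\ne 0$ (because $\dual{\varphi,x_0}_{X',X}=1$), we get $r\|\varphi\|_{X'}>0$, so
\[
\dual{\varphi,z}_{X',X}\le 1-r\|\varphi\|_{X'}<1=\dual{\varphi,x_0}_{X',X}\qquad(z\in M),
\]
which is exactly the claimed strict separation.

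There is no real obstacle here; the only point requiring a little care is the verification that $C$ is genuinely open (so that Lemma~\ref{la:bh1aa} applies) and that the strict inequality survives after taking a supremum — hence the need to shrink from $\dual{\varphi,z}_{X',X}<1$ to the uniform bound $\dual{\varphi,z}_{X',X}\le 1-r\|\varphi\|_{X'}$, using that $\varphi\neq 0$. One might instead phrase the conclusion as ``there exist $\varphi\in X'$ and $\alpha\in\mathbb{R}$ with $\dual{\varphi,z}_{X',X}\le\alpha<\dual{\varphi,x_0}_{X',X}$ for all $z\in M$'', but the form stated in the lemma follows directly as above. (If one does not wish to invoke $x_0\notin M\Rightarrow d>0$ via a separate compactness-type remark, note it is immediate: $M$ closed means its complement is open, so some ball around $x_0$ misses $M$.)
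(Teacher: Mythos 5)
Your proof is correct and takes essentially the same route as the paper, which applies Lemma \ref{la:bh1aa} to $C=\{x\in X\mid \dst_X(x,M)<d/2\}$ — precisely your set $M+\{y\in X\mid\|y\|_X<r\}$ with $r=d/2$. The only (harmless) difference is that your supremum step is superfluous: since $0$ lies in the ball, $M\subset C$, so the strict inequality $\dual{\varphi,x}_{X',X}<1=\dual{\varphi,x_0}_{X',X}$ for $x\in M$ follows directly from Lemma \ref{la:bh1aa}, your argument merely adding the (unneeded) uniform gap $r\|\varphi\|_{X'}$.
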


 \begin{proof}  
Because $M$ is closed, we have $d=\dst_{X}(x_0,M)>0$. Apply Lemma
 \ref{la:bh1aa} to $C=\{x\in X\mid \dst_X(x,M)<d/2\}$ which is an open convex subset
 not containing $x_0$. 
 \end{proof}
 
\begin{lemma}
Letting $M$ be a subspace of $X$ and supposing that
 $x_0\in X$ and $x_0\not\in M$ with $d=\dst_X(x_0,M)>0$, then there exists a $\varphi\in X'$ such
 that $\dual{\varphi,x_0}_{X',X}=1$, $\dual{\varphi,x}_{X',X}=0$ for
 $x\in M$ and $\|\varphi\|_{X'}\le 1/d$.   
\label{la:bh2}
\end{lemma}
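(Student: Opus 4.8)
The plan is to realize $\varphi$ as a Hahn--Banach extension of an explicit functional defined on the subspace $G = M + \mathbb{R}x_0$, exactly in the spirit of the proof of Lemma~\ref{la:a.1}. First I would set $G = \{m + t x_0 \mid m \in M,\ t \in \mathbb{R}\}$, which is a subspace of $X$, and define $g : G \to \mathbb{R}$ by $g(m + t x_0) = t$. Since $x_0 \notin M$, the representation of an element of $G$ as $m + t x_0$ is unique, so $g$ is a well-defined linear functional on $G$; by construction $g$ vanishes on $M$ and $g(x_0) = 1$.

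The key step is the estimate $\|g\|_{G'} \le 1/d$. For $t = 0$ there is nothing to prove. For $t \neq 0$ I would write
\[
\|m + t x_0\|_X = |t|\,\bigl\|x_0 - (-m/t)\bigr\|_X \ge |t|\,\dst_X(x_0, M) = |t|\,d,
\]
using that $-m/t \in M$. Hence $|g(m + t x_0)| = |t| \le \frac{1}{d}\|m + t x_0\|_X$, which is exactly $\|g\|_{G'} \le 1/d$. This is the only place the hypothesis $d > 0$ is used, and it is the heart of the argument; the rest is bookkeeping. Finally I would invoke Lemma~\ref{la:bh0} to obtain an extension $\varphi \in X'$ of $g$ with $\|\varphi\|_{X'} = \|g\|_{G'} \le 1/d$. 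Since $\varphi|_G = g$, we get $\dual{\varphi, x_0}_{X',X} = g(x_0) = 1$ and $\dual{\varphi, x}_{X',X} = g(x) = 0$ for all $x \in M$, which completes the proof.

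As a variant that sidesteps computing $\|g\|_{G'}$ directly, one can instead apply the bare Hahn--Banach Theorem (Lemma~\ref{th:bh}) with the sublinear functional $p(x) = \frac{1}{d}\dst_X(x, M)$: because $M$ is a subspace, $p$ is positively homogeneous and subadditive; on the line $\mathbb{R}x_0$ the functional $tx_0 \mapsto t$ is dominated by $p$ (from $\dst_X(tx_0, M) = |t|\,d$ when $t \ge 0$, and trivially $t \le 0 \le p(tx_0)$ otherwise); the extension $\varphi$ then satisfies $\varphi \le p$ on $X$, so $\varphi$ vanishes on $M$ (where $p \equiv 0$) and $\|\varphi\|_{X'} \le 1/d$ (from $p(x) \le \frac{1}{d}\|x\|_X$). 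I expect no real obstacle here beyond getting the scaling constant right in the domination inequality; I would present the first approach as the main one, since it mirrors Lemma~\ref{la:a.1}, and note this variant in passing.
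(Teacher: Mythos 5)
Your main argument is essentially the paper's own proof: you form the subspace $M+\mathbb{R}x_0$, define $g(m+tx_0)=t$, prove $|t|\le \frac{1}{d}\|m+tx_0\|_X$ exactly as in \eqref{eq:g10}, and extend with the norm-preserving Hahn--Banach lemma (Lemma \ref{la:bh0}); this is correct. The sketched variant via the sublinear functional $p(x)=\frac{1}{d}\dst_X(x,M)$ is also sound, but it is only a cosmetic repackaging of the same estimate.
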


\begin{remark}
 We actually have $\|\varphi\|_{X'}= 1/d$.
\end{remark}

\begin{proof}[Proof of Lemma \ref{la:bh2}]
 We introduce $M_0=\{tx_0+y \mid t\in\mathbb{R},~y\in M\}$. This $M_0$
 is a subspace of $X$.
 Writing $x\in M_0$ as $x=tx_0+y$ with
 $t\in\mathbb{R}$ and $y\in M$, we have
 \begin{equation}
  |t|\le \frac{1}{d}\|x\|_X. 
  \label{eq:g10}
 \end{equation}
In fact, $\|t^{-1}x\|_X=\|x_0+t^{-1}y\|_X\ge d$ if $t\ne 0$ whereas
 \eqref{eq:g10} is trivial if $t=0$.
At this stage, we introduce a functional $g$
  on $M_0$ by
 \[
  g(x)=t\qquad (x=tx_0+y\in M_0).
 \]
By \eqref{eq:g10}, we have $g\in M_0'$ and $\|g\|_{M_0'}\le 1/d$. This
 $g$ can be extended to $X$ preserving the bound. The extension is
 denoted by $\varphi$. Then, it is apparent that
 $\varphi(x_0)=\dual{\varphi,x_0}_{X',X}=\dual{\varphi,1\cdot x_0+0}_{X',X}=1$, 
 $\varphi(x)=\dual{\varphi,x}_{X',X}=0$ for $x=0\cdot x_0+ x\in M$, and
 $\|\varphi\|_{X'}\le 1/d$.    
\end{proof}

\begin{remark}
 Lemma \ref{la:a.1} is taken from \cite[Lemma IV.2.8]{kat95}. Lemma
 \ref{la:bh1aa} could be found in \cite[Lemma 1.3]{bre11}. 
Lemma \ref{la:bh2} is taken from \cite[Theorem III.1.22]{kat95}. 

\end{remark}

 \section{Kato's minimum modulus of operators}
 \label{sec:2}

 Letting $V$ and $W$ be Banach spaces as in \S \ref{sec:0}, and noting
 particularly that $W$ is reflexive, 
 supposing that we are given $A\in\mathcal{L}(V,W')$, then the dual operator
 $A'\in\mathcal{L}(W,V')$ of $A$ is given as 
 $\dualW{Av,w}=\dualV{v,A'w}$ for $v\in V,~w\in W$.
  
If $A$ is considered as an operator from $V$ to $W$, the reflexivity of $W$ is not necessary in the following discussion. See Remark \ref{rem:77}.

The following lemma is well known. 
  
      \begin{lemma}
       \label{la:2.3}
       We have
\begin{subequations} 
  \label{eq:2.3}
 \begin{gather}
\nullsp (A')=\ran(A)^\bot;  \label{eq:2.3a}\\
\nullsp (A)=\ran(A')^\bot;  \label{eq:2.3aa}\\
\ran (A')\subset \nullsp(A)^\bot;  \label{eq:2.3b}\\
\ran (A)\subset \nullsp(A')^\bot.  \label{eq:2.3c}
 \end{gather}
($W$ needs not to be reflexive.) 
\end{subequations}
      \end{lemma}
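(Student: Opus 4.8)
The plan is to prove the four relations in \eqref{eq:2.3} by directly unwinding the definitions of the annihilator, the null space, and the range, together with the defining identity \eqref{eq:a2} of the dual operator $A'$. These are the standard, purely algebraic duality relations for a bounded operator between a Banach space and the dual of a (here reflexive) Banach space; no deep functional-analytic input is needed, only Hahn--Banach in the guise of a single separation fact to rule out degeneracy — and even that is not required for the containments as stated.

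First I would establish \eqref{eq:2.3a}. By definition, $w\in\nullsp(A')$ means $A'w=0$ in $V'$, i.e. $\dualV{v,A'w}=0$ for all $v\in V$. By \eqref{eq:a2} this is exactly $\dualW{Av,w}=0$ for all $v\in V$, which says precisely that $w$ annihilates $\ran(A)$, i.e. $w\in\ran(A)^\bot$ (viewing $W$, via reflexivity, as $(W')'$ so that $\ran(A)^\bot\subset W$ makes sense). Since every step is an equivalence, $\nullsp(A')=\ran(A)^\bot$. Relation \eqref{eq:2.3aa} is the mirror image: $v\in\nullsp(A)$ means $Av=0$ in $W'$, i.e. $\dualW{Av,w}=0$ for all $w\in W$; by \eqref{eq:a2} this is $\dualV{v,A'w}=0$ for all $w\in W$, i.e. $v$ annihilates $\ran(A')$, so $v\in\ran(A')^\bot$. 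Here reflexivity of $W$ is genuinely used to pass from ``$A'w$ ranges over $\ran(A')\subset V'$ as $w$ ranges over $W$'' to the statement that $\dualV{v,\varphi}=0$ for all $\varphi\in\ran(A')$ forces $v\in\ran(A')^\bot$; without it one only gets that $v$ kills the image, which is still $\ran(A')^\bot$, so in fact the parenthetical remark ``($W$ needs not to be reflexive'')'' applies and I would note that the same computation goes through verbatim.

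Next, the two containments \eqref{eq:2.3b} and \eqref{eq:2.3c} are the ``easy halves'' that hold without any closedness hypothesis. For \eqref{eq:2.3b}: take $\varphi\in\ran(A')$, say $\varphi=A'w$ for some $w\in W$; then for every $v\in\nullsp(A)$ we have $\dualV{v,\varphi}=\dualV{v,A'w}=\dualW{Av,w}=\dualW{0,w}=0$, so $\varphi$ annihilates $\nullsp(A)$, i.e. $\varphi\in\nullsp(A)^\bot$. For \eqref{eq:2.3c}: take $\psi\in\ran(A)$, say $\psi=Av$; then for every $w\in\nullsp(A')$ we have $\dualW{\psi,w}=\dualW{Av,w}=\dualV{v,A'w}=\dualV{v,0}=0$, so $\psi\in\nullsp(A')^\bot$. (Equivalently, \eqref{eq:2.3b} is immediate from \eqref{eq:2.3aa} by taking annihilators and using $M\subset M^{\bot\bot}$, and \eqref{eq:2.3c} from \eqref{eq:2.3a} the same way — I would mention this as a shortcut.)

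There is no real obstacle here: the only point requiring a moment's care is bookkeeping with the reflexivity identification $W\cong W''$ so that expressions like $\ran(A)^\bot$, which a priori lives in $(W')'=W''$, are read as subsets of $W$, and correspondingly $\nullsp(A')\subset W$ matches up. I would state this identification once at the start of the proof and then let the duality-pairing symmetry $\dualW{\cdot,\cdot}$ (which the paper already uses symmetrically, $\varphi(x)=\dual{\varphi,x}=\dual{x,\varphi}$) carry the rest. The whole proof is four two-line verifications plus this one remark.
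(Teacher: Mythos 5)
Your proof is correct and follows essentially the same route as the paper: both establish \eqref{eq:2.3a}--\eqref{eq:2.3aa} by unwinding the definitions through the identity \eqref{eq:a2} (the paper treats the two inclusions separately where you write a chain of equivalences), and both verify \eqref{eq:2.3b}--\eqref{eq:2.3c} by the identical two-line computation $\dualV{v,A'w}=\dualW{Av,w}=0$. Your closing observation that reflexivity is not actually needed (the annihilators being read in $W$ resp.\ $V$) matches the paper's parenthetical remark, so no gap remains.
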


\begin{proof}[Proof of \eqref{eq:2.3a}] 
Let $w\in \nullsp(A')\subset W$. For any $v\in V$, we have
 $\dualW{Av,w}=\dualV{v,A'w}=0$, which gives that $w\in \ran(A)^\bot$.
 Consequently $\nullsp(A')\subset \ran(A)^\bot$. The proof
 of $\nullsp(A')\supset \ran(A)^\bot$ can be shown similarly.  
\end{proof}

\begin{proof}[Proof of \eqref{eq:2.3aa}] 
In fact, it is exactly the same as the previous proof.  
\end{proof}

\begin{proof}[Proof of \eqref{eq:2.3b}] 
Letting $f\in \ran (A')\subset V'$, where $f$ is expressed as $f=A'w$ with
 $w\in W$, then for any $v\in \nullsp(A)$, we have
 $\dualV{v,f}=\dualV{v,A'w}=\dualW{Av,w}=0$. Therefore, $f\in
 \nullsp(A)^\bot$ and 
 $\ran (A')\subset \nullsp(A)^\bot$. 
\end{proof}

\begin{proof}[Proof of \eqref{eq:2.3c}] 
It is exactly the same as the previous proof.  
\end{proof}

Relations 
$\ran (A')= \nullsp(A)^\bot$ and $\ran (A)= \nullsp(A')^\bot$
are not always true because, for example, $\nullsp(A)^\bot$ is always closed but
$\ran(A')$ need not be closed.
To derive the opposite inclusions to \eqref{eq:2.3b} and \eqref{eq:2.3c}, we require some deeper
consideration. 
We will use the quotient (factor) space
 \[
 \tilde{V}=V/{\nullsp (A)}= \{\tilde{v}=v-\nullsp (A) \mid v\in V\} 
 \]
 which is a Banach space equipped with the norm
\begin{equation}
\label{eq:2.1}
\|\tilde{v}\|_{\tilde{V}}=\inf_{g\in\nullsp(A)}\|v-g\|_V=\dst_V (v,\nullsp(A)).
\end{equation}
By consideration of this notion for $v-0\in {v}-\nullsp(A)$, we have  
 \begin{equation}
\label{eq:2.1a}
\|\tilde{v}\|_{\tilde{V}}\le \|v\|_V\qquad (v\in V).
 \end{equation}

 We introduce a linear operator $\tilde{A}:\tilde{V}\to W'$ by setting
 \begin{equation}
\label{eq:2.6}
\tilde{A}\tilde{v}=Av\qquad (\tilde{v}=v-\nullsp (A)\in \tilde{V}). 
 \end{equation}
 The operator $\tilde{A}$ is bounded and
\begin{equation}
 \label{eq:2.5}
\ran (\tilde{A})=\ran (A),\qquad \nullsp
 (\tilde{A})=\{\tilde{0}\}.
\end{equation}
Therefore, the inverse $\tilde{A}^{-1}$ exists, where $\domain
 (\tilde{A}^{-1})=\ran (\tilde{A})$. In view of Closed Graph
 Theorem (see Lemma \ref{la:closed1} and Remark \ref{rem:c}),
 $\tilde{A}^{-1}$ is bounded if and only if $\ran(\tilde{A})$ is closed.
That is, we have 
 \begin{equation}
\label{eq:2.7}
  \mbox{$\ran(\tilde{A})$ is closed} \quad \Leftrightarrow\quad
  \|\tilde{A}^{-1}\|_{\ran(\tilde{A}),\tilde{V}}=\sup_{f\in \ran(\tilde{A})}\frac{\|\tilde{A}^{-1}f\|_{\tilde{V}}}{\|f\|_{W'}}<\infty.
 \end{equation}


Motivated by the observation above, we can present the following
 definition. 
 \begin{dfn}
  The \emph{minimum modulus} of an operator $T$ from a Banach space $X$ to another
 Banach space $Y$ is defined as
\begin{equation}
\label{eq:2.2w}
 \mu(T) =\inf_{x\in X} \frac{\|Tx\|_{Y}}{\|{x}\|_{{X}}}. 
 \end{equation}
 The \emph{reduced minimum modulus} of $T$ is defined as
 \begin{equation}
\label{eq:2.2z}
 \gamma(T)=
 \inf_{x\in X} \frac{\|Tx\|_{Y}}{\|\tilde{x}\|_{\tilde{X}}}=
\inf_{x\in X} \frac{\|Tx\|_{Y}}{\dst_X(x,\nullsp(T))},
 \end{equation}
where $\tilde{X}$ denotes the quotient space $\tilde{X}=X/\nullsp (T)$. 
\label{dfn:1}
\end{dfn}

\begin{remark}
It is noteworthy that $\gamma(T)=\infty$ if and only if $Tx=0$ for all $x\in X$
 It is apparent that    
 \begin{equation}
  \label{eq:2.11a}
   \nullsp(T)=\{0\}\quad \Rightarrow \quad 
\gamma(T)=\mu(T). 
 \end{equation}
 Moreover, 
 \begin{equation}
  \label{eq:2.11b}
\mu (T)>0\quad \Rightarrow \quad    \nullsp(T)=\{0\}.
 \end{equation}
\label{rem:2.9}
\end{remark}

 \begin{remark}
The quantity $\gamma(T)$ was introduced into \cite[\S 3.2]{kat58} and
  called the \emph{lower-bound} of $T$. Actually, $\gamma(T)$ was called the reduced minimum
  modulus of $M$ in \cite[\S
  IV.5]{kat95}; it is described in \cite{kat95} that the naming follows
  \cite{gt62}, where $\mu(T)$ was defined. 
 \end{remark}

It is apparent that 
 \begin{subequations}
  \label{eq:2.2}
   \begin{align}
 \gamma(A)& =
 \inf_{v\in V} \frac{\|Av\|_{W'}}{\|\tilde{v}\|_{\tilde{V}}}=
\inf_{v\in V} \frac{\|Av\|_{W'}}{\dst_V(v,\nullsp(A))};\label{eq:2.2a}\\
 \gamma(\tilde{A})& =\gamma(A);\label{eq:2.2b}\\
 \gamma(\tilde{A})&=\|\tilde{A}^{-1}\|_{\ran(\tilde{A}),\tilde{V}}^{-1}. \label{eq:2.2c}
   \end{align}
  \end{subequations}

  
Putting \eqref{eq:2.5}, \eqref{eq:2.7}, and \eqref{eq:2.2} together, we
have the following theorem.
  
  \begin{theorem}[{\cite[Lemma 322]{kat58}}]
   \label{th:2.1}
  $\ran(A)$ is closed if and only if $\gamma(A)>0$. ($W$ needs not to be reflexive.) 
  \end{theorem}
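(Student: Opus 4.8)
The plan is to exploit the factorization $A = \tilde A \circ q$, where $q : V \to \tilde V = V/\nullsp(A)$ is the canonical quotient map, together with the identity \eqref{eq:2.2c}, i.e. $\gamma(\tilde A) = \|\tilde A^{-1}\|_{\ran(\tilde A),\tilde V}^{-1}$, and the characterization \eqref{eq:2.7} of closedness of $\ran(\tilde A)$ in terms of boundedness of $\tilde A^{-1}$. Since $\ran(A) = \ran(\tilde A)$ by \eqref{eq:2.5} and $\gamma(A) = \gamma(\tilde A)$ by \eqref{eq:2.2b}, the theorem reduces to the assertion
\[
\ran(\tilde A) \text{ is closed} \quad \Longleftrightarrow \quad \gamma(\tilde A) > 0,
\]
which in turn, via \eqref{eq:2.7} and \eqref{eq:2.2c}, is just the tautology $\|\tilde A^{-1}\| < \infty \Leftrightarrow \|\tilde A^{-1}\|^{-1} > 0$. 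So the real content is already packaged in the preliminary observations; the proof is essentially a matter of chaining them together correctly.

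First I would recall that $\tilde A : \tilde V \to W'$ is a bounded injective linear operator with $\ran(\tilde A) = \ran(A)$, so its inverse $\tilde A^{-1} : \ran(\tilde A) \to \tilde V$ is a well-defined linear map. By Lemma \ref{la:closed1} (the Closed Graph Theorem consequence), $\tilde A^{-1} \in \mathcal{L}(\ran(\tilde A), \tilde V)$ \emph{if and only if} $\ran(\tilde A)$ is closed in $W'$; here one uses that $W'$, being a dual space, is complete, so $\ran(\tilde A)$ being closed makes it a Banach space to which the Closed Graph Theorem applies. This is precisely \eqref{eq:2.7}. Next, from the definition \eqref{eq:2.2z} of the reduced minimum modulus applied to $T = \tilde A$ (noting $\nullsp(\tilde A) = \{\tilde 0\}$, so $\dst_{\tilde V}(\tilde v, \nullsp(\tilde A)) = \|\tilde v\|_{\tilde V}$), one has
\[
\gamma(\tilde A) = \inf_{\tilde v \in \tilde V} \frac{\|\tilde A \tilde v\|_{W'}}{\|\tilde v\|_{\tilde V}} = \left( \sup_{\tilde v \in \tilde V} \frac{\|\tilde v\|_{\tilde V}}{\|\tilde A \tilde v\|_{W'}} \right)^{-1} = \left( \sup_{f \in \ran(\tilde A)} \frac{\|\tilde A^{-1} f\|_{\tilde V}}{\|f\|_{W'}} \right)^{-1} = \|\tilde A^{-1}\|_{\ran(\tilde A),\tilde V}^{-1},
\]
which is \eqref{eq:2.2c}. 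Combining: $\ran(\tilde A)$ closed $\Leftrightarrow$ $\|\tilde A^{-1}\|_{\ran(\tilde A),\tilde V} < \infty$ $\Leftrightarrow$ $\gamma(\tilde A) > 0$. Finally, transporting back through $\ran(A) = \ran(\tilde A)$ and $\gamma(A) = \gamma(\tilde A)$ gives $\ran(A)$ closed $\Leftrightarrow$ $\gamma(A) > 0$.

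I anticipate the only delicate point — and the one I would be careful to spell out — is the degenerate case $A = 0$ (more precisely $\nullsp(A) = V$): then $\tilde V = \{\tilde 0\}$, $\ran(A) = \{0\}$ is trivially closed, and by the convention noted in Remark \ref{rem:2.9} we have $\gamma(A) = \infty > 0$, so the equivalence still holds. Apart from that, the proof is a straightforward assembly of \eqref{eq:2.5}, \eqref{eq:2.7}, \eqref{eq:2.2b}, and \eqref{eq:2.2c}, exactly as the sentence preceding the theorem statement ("Putting \eqref{eq:2.5}, \eqref{eq:2.7}, and \eqref{eq:2.2} together") advertises; no new analytic input is needed beyond the Closed Graph Theorem machinery already in place.
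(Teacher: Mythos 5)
Your proposal is correct and follows exactly the route the paper itself takes: the sentence preceding the theorem ("Putting \eqref{eq:2.5}, \eqref{eq:2.7}, and \eqref{eq:2.2} together") \emph{is} the paper's proof, namely the chain through the injective quotient operator $\tilde{A}$, Lemma \ref{la:closed1}, and the identity $\gamma(\tilde{A})=\|\tilde{A}^{-1}\|_{\ran(\tilde{A}),\tilde{V}}^{-1}$. Your explicit treatment of the degenerate case $\nullsp(A)=V$ (where $\gamma(A)=\infty$ by the convention of Remark \ref{rem:2.9}) is a small but welcome addition that the paper leaves implicit.
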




\begin{remark}
Theorem \ref{th:2.1} might be understood as a ``quantitative version'' of the result of Banach described in Remark \ref{rem:c}.    
\end{remark}

\begin{remark}
Theorem \ref{th:2.1} could be found in \cite[Theorems 5.17.3, 5.18.2]{od96}.    
\end{remark}

\begin{example}
 \label{ex:kikuchi}
We give an example of $A$ whose range $\ran(A)$ is not a closed set. Let $V=W=L^2(I)$ with
 $I=(0,1)$. We introduce $A\in\mathcal{L}(V,W')$ by
\[
 \dualW{Av,w}=\int_0^1tv(t)w(t)~dt.
\] 
(Verify that $A$ is actually a bounded linear operator of $V\to W'$.) We
 consider $f\in W'$ defined as
\[
 \dualW{f,w}=\int_0^1w(t)~dt.
\]
Then, we have $f\not\in \ran(A)$. Indeed, if there is a $u_0\in V$ such that
 $Au_0=f$, this $u_0$ must satisfy $1-tu_0=0$ a.e.~$t\in I$. The
 ``candidate'' is given as $u_0=1/t$; however, $u_0=1/t$ cannot
 belong to $V$. Next, for $\ep>0$, we consider $f_\ep\in
 W'$ and $u_\ep\in V$ defined as
\[
 \dualW{f_\ep,w}=\int_\ep^1w(t)~dt\quad\mbox{and}\quad
	 u_\ep =\begin{cases}
		 0 & (0<t<\ep)\\
	         1/t & (\ep\le t<1).
		\end{cases}
\]
Then, we have $Au_\ep=f_\ep$ and, hence, $f_\ep\in
 \ran(A)$. Moreover, we have $f\in \overline{\ran (A)}$, because
 $\|f_\ep-f\|_{W'}\to 0$ as $\ep\to\infty$. Those imply that
 $\ran(A)\ne\overline{\ran(A)}$. Therefore, $\ran(A)$ is not closed.   
 (In the similar way, we can prove that $W'=\overline{\ran(A)}$.)
 
On the other hand, because $\nullsp (A)=\{0\}$, we estimate as 
\[
 \gamma(A)\le \lim_{\ep\to 0}\frac{\|Au_\ep\|_{W'}}{\|u_\ep\|_V}=0,
\]
which implies $\gamma(A)=0$.  
\end{example}

The following theorem plays a key role below. 

    \begin{theorem}[{\cite[Lemma 334]{kat58}}]
   \label{th:2.2}
     We have
     \[
     \gamma(A)=\gamma(A').
     \]
    Particularly $\ran(A)$ is closed if and
     only if $\ran(A')$ is closed. 
    \end{theorem}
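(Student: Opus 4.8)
The plan is to prove the equality $\gamma(A)=\gamma(A')$ by establishing two inequalities, using the Hahn--Banach consequences from Section~\ref{sec:1} and the duality relations of Lemma~\ref{la:2.3}. The main technical device will be the identity $\|A'w\|_{V'}=\sup_{v\in V}\dfrac{\dualW{Av,w}}{\|v\|_V}=\dst_{V'}(A'w,\nullsp(A))$ together with its mirror image, so that distances to null spaces get converted into supremum expressions that can be compared directly with the defining formulas for $\gamma(A)$ and $\gamma(A')$.

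First I would dispose of trivial cases: if $\ran(A)=\{0\}$ then $A=0$, hence $A'=0$, and both reduced moduli equal $\infty$ by Remark~\ref{rem:2.9}; so we may assume $A\ne 0$ and work with finite positive quantities. Then I would prove $\gamma(A')\ge\gamma(A)$. Fix $w\in W$; I want a lower bound for $\|A'w\|_{V'}/\dst_W(w,\nullsp(A'))$. Using Lemma~\ref{la:a.1} with $M=\ran(A)\subset W'$ and recalling $\ran(A)^\bot=\nullsp(A')$ (by \eqref{eq:2.3a}), reflexivity of $W$ lets me write, for $w\in W$ viewed in $W''$,
\[
\dst_W(w,\nullsp(A'))=\dst_{W''}(w,\ran(A)^\bot)=\sup_{v\in V}\frac{|\dualW{Av,w}|}{\|Av\|_{W'}}.
\]
On the other hand $\|A'w\|_{V'}=\sup_{v\in V}\dfrac{|\dualV{v,A'w}|}{\|v\|_V}=\sup_{v\in V}\dfrac{|\dualW{Av,w}|}{\|v\|_V}$. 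For each fixed $v\notin\nullsp(A)$ we have
$|\dualW{Av,w}|\le \|A'w\|_{V'}\|v\|_V$ and also $\|Av\|_{W'}\ge\gamma(A)\,\dst_V(v,\nullsp(A))$. Combining these with the supremum formula for the distance and optimizing over $v$ gives $\gamma(A)\,\dst_W(w,\nullsp(A'))\le\|A'w\|_{V'}$, hence $\gamma(A')\ge\gamma(A)$.

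For the reverse inequality $\gamma(A)\ge\gamma(A')$ I would run the symmetric argument, but here reflexivity is not available for $V$, so instead of representing a distance in $V$ via a bidual I would argue directly: fix $v\in V$ with $\dst_V(v,\nullsp(A))>0$, i.e. $v\notin\nullsp(A)=\ran(A')^\bot$ (using \eqref{eq:2.3aa}). By Lemma~\ref{la:bh2} applied with $x_0=v$ and $M=\ran(A')$ (a subspace of $V'$ — here I pair $V$ against $V'$ in the natural direction) there is $\psi\in V'$... more precisely I would use Lemma~\ref{la:a.1} in the form $\dst_V(v,{}^\bot\!\ran(A'))=\sup_{w}|\dualV{v,A'w}|/\|A'w\|_{V'}$, noting $\nullsp(A)={}^\bot(\ran(A'))$ holds as a direct consequence of \eqref{eq:2.3aa} without any reflexivity (an annihilator-in-$V$ argument). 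Then the same chain of inequalities, now using $\|Av\|_{W'}=\sup_{w\in W}|\dualW{Av,w}|/\|w\|_W$ and $\|A'w\|_{V'}\ge\gamma(A')\,\dst_W(w,\nullsp(A'))$, yields $\gamma(A')\,\dst_V(v,\nullsp(A))\le\|Av\|_{W'}$, hence $\gamma(A)\ge\gamma(A')$. The final assertion about closed ranges is then immediate from Theorem~\ref{th:2.1} applied to both $A$ and $A'$.

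The hard part will be the bookkeeping of which duality pairing lives where, and in particular getting the ``reverse'' direction ($\gamma(A)\ge\gamma(A')$) without invoking reflexivity of $V$: I must be careful that the identity $\nullsp(A)=\{v\in V: \dualV{v,A'w}=0\ \forall w\in W\}$ is exactly \eqref{eq:2.3aa} and that Lemma~\ref{la:a.1}'s ``$\sup$ over $M$'' formula is being applied with $M=\ran(A')\subseteq V'$ while the ambient space whose dual contains $M$ is... $V$, which forces me to phrase it as a pre-annihilator statement rather than a literal application of Lemma~\ref{la:a.1} as stated. A clean way to sidestep this is to prove the single lemma ``$\dst_V(v,\nullsp(A))=\sup\{\dualV{v,f}/\|f\|_{V'}: f\in\ran(A')\}$'' by Hahn--Banach directly (Lemma~\ref{la:bh2}), and symmetrically for $W$ using reflexivity, and then both inequalities follow from one short computation. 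I expect the only real subtlety is confirming the supremum representations, after which the comparison argument is purely arithmetic.
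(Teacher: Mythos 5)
Your first inequality $\gamma(A')\ge\gamma(A)$ is correct and is essentially the paper's own Step~1: the identity \eqref{eq:2.10} obtained from Lemma~\ref{la:a.1} with $X=W'$, $M=\ran(A)$, reflexivity of $W$, and \eqref{eq:2.3a}, followed by replacing $v$ by $v-g$, $g\in\nullsp(A)$. The genuine gap is in the reverse direction. The ``single lemma'' you propose to prove by Hahn--Banach, namely $\dst_V(v,\nullsp(A))=\sup\{|\dualV{v,f}|/\|f\|_{V'}\colon f\in\ran(A')\}$, is not a Hahn--Banach fact and is false for a general subspace $M\subset V'$ in place of $\ran(A')$: Lemma~\ref{la:bh2} only gives $\dst_V(v,{}^\bot M)=\sup\{|\dual{f,v}_{V',V}|/\|f\|_{V'}\colon f\in({}^\bot M)^\bot\}$, and $({}^\bot M)^\bot$ is the \emph{weak-$*$ closure} of $M$, over which the supremum can be strictly larger than over $M$ itself. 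A concrete failure: $X=c_0$, $X'=\ell^1$, $M=\{f\in\ell^1\colon \sum_n f_n=0\}$; then ${}^\bot M=\{0\}$, so the distance of $e_1=(1,0,0,\dots)$ to ${}^\bot M$ is $1$, while $\sup_{f\in M,\|f\|_1\le1}|\dual{f,e_1}|=1/2$. Likewise, applying Lemma~\ref{la:a.1} literally with $X=V'$, $M=\ran(A')$ only controls the distance computed in $V''$, which may be strictly smaller than $\dst_V(v,\nullsp(A))$ because $V$ is \emph{not} assumed reflexive; so your chain of inequalities bounds $\|Av\|_{W'}$ below by $\gamma(A')$ times the wrong distance. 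To make your identity true you would need $\nullsp(A)^\bot\subset\ran(A')$ (with norm control), i.e.\ essentially Theorem~\ref{th:2.4} --- but the paper proves that theorem \emph{using} $\gamma(A)>0$, i.e.\ downstream of Theorem~\ref{th:2.2}, so invoking it here is circular. (If $V$ were reflexive your symmetry would be legitimate --- just apply the first inequality to $A'$ --- but the theorem assumes only $W$ reflexive, and the non-reflexive $V$ is exactly the case at issue.)

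What the missing direction actually requires is a completeness (open-mapping type) argument, and this is how the paper proceeds: it shows $\ball{W'}{\gamma'}\subset K=\overline{A\ball{V}{1}}$ by separating a point $f_0\in\ran(A)\setminus K$ from the closed convex set $K$ (Lemma~\ref{la:bh1}), using reflexivity of $W$ to realize the separating functional as an element $w\in W$ and then the bound $\|A'w\|_{V'}\ge\gamma'\|\tilde{w}\|_{\tilde{W}}$ together with \eqref{eq:2.10}; it then removes the closure, upgrading \eqref{eq:pf10} to \eqref{eq:pf11}, via the successive-approximation Lemma~\ref{la:aaa} of Appendix~\ref{sec:a}, which uses completeness of $V$ and closedness of $A$. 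This category-type step is the real content of ``$\ran(A')$ closed $\Rightarrow\ \ran(A)$ closed'' and cannot be replaced by duality alone; since it is entirely absent from your plan, the proposal does not prove $\gamma(A)\ge\gamma(A')$.
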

    

   \begin{proof}
    For abbreviation, we write $\gamma=\gamma(A)$ and
    $\gamma'=\gamma(A')$.

    \noindent \emph{Step 1.} 
We prove that $\gamma'\ge \gamma$. If $\gamma=\infty$, then
    we have $Av=0$ for all $v\in V$. Therefore, $0=\dualW{Av,w}=\dualV{v,A'w}$
   for all $v\in V$ and $w\in W$, which implies $A'w=0$ for all $w\in W$.
 Consequently, $\gamma'=\infty$. Therefore, we might assume that
    $0<\gamma<\infty$, because $\gamma'\ge \gamma$ might be readily apparent if
   $\gamma=0$. 
    Letting $w\in W$, then $\ran(A)$ is closed by theorem \ref{th:2.1}. Therefore,
     we can apply Lemmas \ref{la:2.3} and \ref{la:a.1} (for $X=W'$, $M=\ran(A)$) to obtain 
 \begin{equation*}
 \dst_{W}(w,\nullsp (A'))=\sup_{f\in\ran(A)}\frac{|\dual{f,w}_{W',W}|}{\|f\|_{W'}}\qquad (w\in W).
 \end{equation*}
 This, together with \eqref{eq:2.1}, implies that 
\begin{equation}
\|\tilde{w}\|_{\tilde{W}}=\sup_{f\in \ran(A)}\frac{|\dualW{f,w}|}{\|f\|_{W'}}.
\label{eq:2.10}
 \end{equation}
Therefore, for a sufficiently small $\ep>0$,
$f\in\ran(A)$ exists such that $|\dualW{f,w}|\ge
   (1-\ep)\|f\|_{W'}\|\tilde{w}\|_{\tilde{W}}$. $f$ admits the
   representation $f=Av\in \ran(A)$ with $v\in V$. Therefore, we deduce
\begin{align*}
 |\dualW{Av,w}|
    & \ge (1-\ep)\|Av\|_{W'}\|\tilde{w}\|_{\tilde{W}}\\
    & \ge (1-\ep)\gamma \|\tilde{v}\|_{\tilde{V}}\|\tilde{w}\|_{\tilde{W}}.
\end{align*}
   Using $|\dualW{Av,w}|=|\dualV{v,A'w}|\le \|v\|_V\|A'w\|_{V'}$, we have
   \[
   \|v\|_V\|A'w\|_{V'}
 \ge (1-\ep)\gamma \|\tilde{v}\|_{\tilde{V}}\|\tilde{w}\|_{\tilde{W}}.
   \]
These inequalities remain valid if $v$ is replaced by $v-g$ for any
   $g\in\nullsp (A)$. Consequently, we have
   \[
   \|A'w\|_{V'}\inf_{g\in\nullsp (A)}\|v-g\|_V
 \ge (1-\ep) \gamma \|\tilde{v}\|_{\tilde{V}}\|\tilde{w}\|_{\tilde{W}}.
   \]
    Therefore, we
   obtain 
   \[
    \gamma'\ge (1-\ep) \gamma. 
   \]
Letting $\ep\downarrow 0$, we deduce 
    $\gamma'\ge
   \gamma$.    
 %

\smallskip
    
    \noindent \emph{Step 2.} {(a)} We prove the opposite inequality $\gamma'\le \gamma$.     
Because the inequality is trivial if $\gamma'=0$, we assume that $\gamma'>0$. 
    In general, we write $\ball{X}{r}$ to express the open ball in a
    Banach space $X$
    with center $0$ and radius $r>0$; $\ball{X}{r}=\{x\in X\mid
    \|x\|_X<r\}$. 
    The closure of
    $A\ball{V}{1}=\{Av\in W'\mid v\in \ball{V}{1}\}$ in
    $W'$ is denoted as $K=\overline{A\ball{V}{1}}$, which is a
    convex closed subset in $W'$ with $0\in K$.

\smallskip
    
\noindent {(b)} We show that
\begin{equation}
\ball{W'}{\gamma'}\subset K=\overline{A\ball{V}{1}}.
 \label{eq:pf10}
\end{equation}
    To this purpose, we prove that
\begin{equation}
      f_0\in \ran(A),\ f_0\not\in K \quad \Rightarrow\quad
\|f_0\|_{W'}\ge \gamma'   .
    \label{eq:pf1}
   \end{equation}
In view of
    Lemma \ref{la:bh1}, there exists an $\eta\in (W')'$ such that
\[
 \dual{\eta,f}_{(W')',W'}<\dual{\eta,f_0}_{(W')',W'}\qquad (f\in
    K).
\]
Because $W$ is reflexive, there exists a $w\in W$ such that
\[
     \dualW{f,w}<\dualW{f_0,w}\qquad (f\in
    K). 
\]
By considering $-f$ instead of $f$, we have     
\begin{equation*}
    |\dualW{f,w}|<\dualW{f_0,w}<|\dualW{f_0,w}|\qquad (f\in
    K). 
\end{equation*}
Letting $0\ne v\in V$ and $0<\ep<1$ and setting
    $\hat{v}=(1-\ep)v/\|v\|_V\in \ball{V}{1}$, then by   
    substituting $f=A\hat{v}$, we obtain
\[
    (1-\ep)\frac{|\dualW{Av,w}|}{\|v\|_V}
    =(1-\ep)\frac{|\dualV{v,A'w}|}{\|v\|_V}
    \le |\dualW{f_0,w}|.
\]
Consequently,
\[
    (1-\ep)\sup_{v\in V}\frac{|\dualV{v,A'w}|}{\|v\|_V}
    \le |\dualW{f_0,w}|.
\]
By letting $\ep\downarrow 0$, 
\begin{equation}
    \|A'w\|_{V'}\le |\dualW{f_0,w}|.
\label{eq:161}
\end{equation}
We can apply \eqref{eq:2.10} to obtain 
\begin{equation*}
    \|A'w\|_{V'}\le \|\tilde{w}\|_{\tilde{W}}\|f_0\|_{W'}.
\end{equation*}
We know that $\|A'w\|_{V'}\ge \gamma'
    \|\tilde{w}\|_{\tilde{W}}$ for any $w\in W$. Combining these, we
    have $\|f_0\|_{W'}\ge \gamma'$, which completes the proof of
    \eqref{eq:pf1}. 

\smallskip
    
\noindent {(c)} The inclusion \eqref{eq:pf10} implies that     
\begin{equation}
\ball{W'}{\gamma'}\subset A\ball{V}{1}.
 \label{eq:pf11}
\end{equation}
This is verified by a standard argument; we will mention the detail in  Appendix \ref{sec:a}.

At this stage, letting $0\ne v\in V$ and letting $0<\ep<1$, we set
    $v^*=(1-\ep)\gamma'v/\|Av\|_{W'}$. Then, because
    $\|Av^*\|_{W'}=(1-\ep)\gamma'<\gamma'$, we have $Av^*\in A\ball{X}{1}$. This implies
    that there exists a ${v}^{\#}\in \ball{V}{1}$ satisfying
    $A{v}^{\#}=Av^*$ and ${v}^{\#}=v^*-g$ for any $g\in \nullsp(A)$. We
    have
\[
1>\|{v}^{\#}\|_V=\frac{(1-\ep)\gamma'}{\|Av\|_{W'}} \|v-\alpha g\|_V,   
\]    
where $\alpha=\|Av\|_{W'}/((1-\ep)\gamma')$. This gives that
    \[
     \|Av\|_{W'}> (1-\ep)\gamma'\|v-\alpha g\|_{V}\ge (1-\ep)\gamma'\|\tilde{v}\|_V.
    \]
    Because $\ep$ is arbitrary, we infer
    $\gamma'\|\tilde{v}\|_{\tilde{V}}\le \|Av\|_{W'}$, which implies that
    $\gamma\ge \gamma'$. This completes the proof of Theorem \ref{th:2.2}.     
\end{proof}

Using this theorem, we can prove the following results. 

\begin{theorem}
       \label{th:2.4a}
       $\ran (A)\supset \nullsp(A')^\bot$ if $\ran(A')$ is closed. 
\end{theorem}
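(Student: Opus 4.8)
The plan is to derive the stated inclusion by combining Theorem~\ref{th:2.2} with a Hahn--Banach separation argument. Since \eqref{eq:2.3c} already gives $\ran(A)\subset\nullsp(A')^\bot$, the entire content lies in the reverse inclusion, and the hypothesis ``$\ran(A')$ is closed'' enters exactly once: by Theorem~\ref{th:2.2} it forces $\ran(A)$ to be a \emph{closed} linear subspace of $W'$, which is precisely what makes the separation argument available.

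First I would fix an arbitrary $f_0\in\nullsp(A')^\bot$ and argue by contradiction, assuming $f_0\notin\ran(A)$. Because $\ran(A)$ is a closed subspace of $W'$ not containing $f_0$, we have $d=\dst_{W'}(f_0,\ran(A))>0$, so Lemma~\ref{la:bh2} (applied with $X=W'$ and $M=\ran(A)$) produces $\eta\in(W')'$ with $\dual{\eta,f_0}_{(W')',W'}=1$ and $\dual{\eta,f}_{(W')',W'}=0$ for every $f\in\ran(A)$.

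Next I would invoke the reflexivity of $W$ to identify $\eta$ with an element $w\in W$, i.e. $\dualW{f,w}=\dual{\eta,f}_{(W')',W'}$ for all $f\in W'$. Then $\dualW{Av,w}=0$ for all $v\in V$, and by the definition \eqref{eq:a2} of $A'$ this reads $\dualV{v,A'w}=0$ for all $v\in V$, whence $A'w=0$, that is, $w\in\nullsp(A')$. Since $f_0\in\nullsp(A')^\bot$ this forces $\dualW{f_0,w}=0$, contradicting $\dualW{f_0,w}=\dual{\eta,f_0}_{(W')',W'}=1$. Hence $f_0\in\ran(A)$, which proves $\nullsp(A')^\bot\subset\ran(A)$; together with \eqref{eq:2.3c} this in fact yields $\ran(A)=\nullsp(A')^\bot$, in particular the asserted inclusion.

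The step I expect to need the most care is the first one, namely recognizing that the hypothesis on $\ran(A')$ is used solely through the nontrivial equivalence of Theorem~\ref{th:2.2}; once $\ran(A)$ is known to be closed, the remainder is the routine ``closed range $\Rightarrow$ range equals the annihilator of the adjoint's kernel'' argument, whose only genuine input beyond Hahn--Banach is the reflexivity of $W$, needed to realize the separating functional $\eta$ as an element $w\in W$. A minor point to verify is that Lemma~\ref{la:bh2} applies verbatim to the pair $(W',\ran(A))$, which it does since $\ran(A)$ is a subspace of the Banach space $W'$ and the required strict positivity of the distance $d$ is exactly the closedness of $\ran(A)$.
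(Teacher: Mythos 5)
Your proposal is correct and follows essentially the same route as the paper's own proof: use Theorem~\ref{th:2.2} to transfer closedness from $\ran(A')$ to $\ran(A)$, then apply Lemma~\ref{la:bh2} to separate a putative $f_0\notin\ran(A)$, identify the functional with some $w\in W$ by reflexivity, conclude $w\in\nullsp(A')$, and derive the contradiction with $f_0\in\nullsp(A')^\bot$. No gaps; the argument matches the paper's proof of Theorem~\ref{th:2.4a} step for step.
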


    \begin{proof}
     Letting $f\in \nullsp(A')^\bot$, then
we prove $f\in \ran (A)$
    by presenting a contradiction: assume
    $f\not\in\ran(A)$. Because $\ran(A')$ is closed, $\ran(A)$ is also
     closed in view of Theorem \ref{th:2.2}. Therefore, we have
     $d=\dst_{W'}(f,\ran(A))>0$ and can apply Lemma
     \ref{la:bh2}. Consequently, there exists an $\eta\in
     (W')'$ such that
  \[
   \dual{\eta,f}_{(W')',W'}=1,\qquad 
   \dual{\eta,g}_{(W')',W'}=0 \quad (g\in \ran(A)). 
  \]   
Because $W$ is reflexive, there exists a $w\in W$ such that 
  \begin{equation}
   \dualW{f,w}=1,\qquad 
   \dualW{g,w}=0 \quad (g\in \ran(A)). 
\label{eq:165}   
  \end{equation}
By the second identity of \eqref{eq:165}, we have $0=\dualW{Av,w}=\dualV{v,A'w}$ for
     any $v\in V$, which implies that $A'w=0$. Therefore
      $w\in\nullsp(A')$. Because $f\in \nullsp(A')^\bot$,
      $\dualW{f,w}=0$. However, this contradicts to the first equality of
     \eqref{eq:165}. 
    \end{proof}

      \begin{theorem}[{\cite[Lemma 335]{kat58}}]
       \label{th:2.4}
       $\ran (A')\supset \nullsp(A)^\bot$ if $\ran(A)$ is closed. 
      \end{theorem}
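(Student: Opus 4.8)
\emph{Plan.} The goal is to prove the non-trivial inclusion $\nullsp(A)^\bot\subset\ran(A')$; the opposite inclusion $\ran(A')\subset\nullsp(A)^\bot$ is already \eqref{eq:2.3b} of Lemma \ref{la:2.3}. The proof of Theorem \ref{th:2.4a} separated $f$ from $\ran(A)$ by a functional in $(W')'$ and identified it with an element of $W$ using the reflexivity of $W$. The same scheme cannot be repeated verbatim here: separating $f$ from $\ran(A')$ would produce an element of $(V')'=V''$, which we cannot push back into $V$ because $V$ is \emph{not} assumed reflexive. So I would instead dualize only on the $W$-side and feed in the hypothesis ``$\ran(A)$ closed'' through the bounded inverse $\tilde A^{-1}$.

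First I would fix $f\in\nullsp(A)^\bot\subset V'$ and use it to manufacture a bounded linear functional on $\ran(A)$. Since $f$ vanishes on $\nullsp(A)$, it induces a functional $\tilde f\in\tilde V'$ on the quotient $\tilde V=V/\nullsp(A)$, and the composition $g\defeqn\tilde f\circ\tilde A^{-1}$ is well defined on $\ran(\tilde A)=\ran(A)$ by \eqref{eq:2.5}; explicitly $g(Av)=\dualV{v,f}$, the value being independent of the representative $v$ precisely because $f\in\nullsp(A)^\bot$. Because $\ran(A)$ is closed, $\tilde A^{-1}$ is bounded (Lemma \ref{la:closed1}; see also \eqref{eq:2.7}), hence $g$ is a bounded linear functional on the Banach space $\ran(A)\subset W'$, with $\|g\|_{\ran(A)'}\le\|\tilde f\|_{\tilde V'}\,\|\tilde A^{-1}\|_{\ran(\tilde A),\tilde V}<\infty$.

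Next I would extend and transport $g$ back to $W$. By the Hahn--Banach consequence Lemma \ref{la:bh0}, $g$ admits an extension $G\in(W')'$ with $\|G\|_{(W')'}=\|g\|_{\ran(A)'}$, and since $W$ is reflexive there is $w\in W$ with $\dual{G,h}_{(W')',W'}=\dualW{h,w}$ for all $h\in W'$. Then $A'w=f$, because for every $v\in V$,
\[
\dualV{v,A'w}=\dualW{Av,w}=\dual{G,Av}_{(W')',W'}=g(Av)=\dualV{v,f},
\]
the third equality holding since $Av\in\ran(A)$, where $G$ restricts to $g$, and the last being the definition of $g$. Hence $f=A'w\in\ran(A')$, which proves $\nullsp(A)^\bot\subset\ran(A')$ and completes the proof.

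The only real obstacle is the $V$--$W$ asymmetry described above, and the device that resolves it is the factorization $g=\tilde f\circ\tilde A^{-1}$: it converts ``$\ran(A)$ closed'' into boundedness of a functional living on a subspace of $W'$, after which reflexivity of $W$ does the rest. (One could alternatively observe that $\ran(A')$ is closed via Theorem \ref{th:2.2} and try to mimic Theorem \ref{th:2.4a}, but dualizing would still force a detour through $V''$; the quotient argument above is the cleaner route, and it does not even invoke Theorem \ref{th:2.2}.)
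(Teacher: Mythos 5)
Your proof is correct and is essentially the paper's own argument: the paper defines the same functional $\phi_f(Av)=\dualV{f,v}$ on $\ran(A)$, derives its boundedness from the closedness of $\ran(A)$, extends it by Hahn--Banach to an element of $(W')'$, and uses the reflexivity of $W$ to produce $w$ with $A'w=f$. The only cosmetic difference is that the paper states the boundedness via $\gamma(A)>0$ (Theorem \ref{th:2.1}) rather than via $\|\tilde A^{-1}\|<\infty$, but these are the same fact because $\gamma(A)=\gamma(\tilde A)=\|\tilde{A}^{-1}\|_{\ran(\tilde{A}),\tilde{V}}^{-1}$ by \eqref{eq:2.2}.
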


   \begin{proof}
    Letting $f\in \nullsp(A)^\bot$, then
we introduce a linear functional $\phi_f$ on $R=\ran(A)$ by setting 
    $\phi_f(Av)=\dualV{f,v}$ for $v\in V$, which is
    possible because $\dualV{f,v}=0$ for $v\in \nullsp (A)$.
    The functional $\phi_f$ is bounded. In fact, we have 
    \[
     |\phi_f(Av)|=|\dualV{f,v}|\le \|f\|_{V'}\|v\|_V
    \]
and $v$ might be replaced by $v-g$ with any $g\in
    \nullsp(A)$. Consequently,
    \[
    |\phi_f(Av)|\le \|f\|_{V'}\|\tilde{v}\|_{\tilde{V}}\le
    \|f\|_{V'}\frac{1}{\gamma(A)}\|Av\|_{W'}
    \]
which implies that $\|\phi_f\|_{R'}=\sup_{\psi\in
    R}|\phi_f(\psi)|/\|\psi\|_{W'}\le \gamma(A)^{-1}\|f\|_{V'}$.
    By Hahn--Banach theorem, there exists a $\tilde{\phi}_f\in (W')'$ such
    that
\[
 \dual{\tilde{\phi}_f,\psi}_{(W')',W'}=\phi_f(\psi)\quad (\psi\in
    R),\qquad \|\tilde{\phi}_f\|_{(W')'}\le\gamma(A)^{-1}\|f\|_{V'}.
\]    
Because $W$ is reflexive, there exists a $w\in W$ such that
\[
 \dual{\tilde{\phi}_f,\psi}_{(W')',W'}=\dualW{\psi,w}\qquad (\forall
    \psi\in W').
\]
Summing up, we deduce    
    \begin{equation*}
    \dual{Av,w}_{W',W}=\phi_f(Av)=\dualV{f,v}\qquad (v\in V).
    \end{equation*}
This relation implies the expression $f=A'w$: $f\in \ran(A')$.
   \end{proof}

Now, we can prove the following
well-known result called Closed Range Theorem.

\begin{corollary}
 \label{th:2.10}
 The following \textup{(i)--(iv)} are equivalent: 
 \begin{itemize}
  \item[\textup{(i)}] $\ran(A)$ is closed; 
  \item[\textup{(ii)}] $\ran(A')$ is closed;
\item[\textup{(iii)}] $\ran (A)=\nullsp(A')^\bot$; 
\item[\textup{(iv)}] $\ran (A')=\nullsp(A)^\bot$.
 \end{itemize}
\end{corollary}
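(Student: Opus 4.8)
The plan is to assemble Corollary \ref{th:2.10} entirely from the results already established, treating it as a bookkeeping argument that closes a cycle of implications; no new analysis is needed. First I would dispatch the equivalence \textup{(i)} $\Leftrightarrow$ \textup{(ii)} by invoking Theorem \ref{th:2.2}: since $\gamma(A)=\gamma(A')$, applying Theorem \ref{th:2.1} to $A$ and, symmetrically, to $A'$ shows that $\ran(A)$ is closed $\Leftrightarrow$ $\gamma(A)>0$ $\Leftrightarrow$ $\gamma(A')>0$ $\Leftrightarrow$ $\ran(A')$ is closed. (Here one uses that $A'\in\mathcal{L}(W,V')$ with $V'$ a Banach space, so Theorem \ref{th:2.1} genuinely applies to $A'$; reflexivity of $W$ is what makes the setup symmetric.)

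Next I would run the cycle \textup{(i)} $\Rightarrow$ \textup{(iv)} $\Rightarrow$ \textup{(ii)} and \textup{(ii)} $\Rightarrow$ \textup{(iii)} $\Rightarrow$ \textup{(i)}. For \textup{(i)} $\Rightarrow$ \textup{(iv)}: assuming $\ran(A)$ closed, Theorem \ref{th:2.4} gives $\ran(A')\supset\nullsp(A)^\bot$, while \eqref{eq:2.3b} of Lemma \ref{la:2.3} gives the reverse inclusion $\ran(A')\subset\nullsp(A)^\bot$ unconditionally, so $\ran(A')=\nullsp(A)^\bot$. Then \textup{(iv)} $\Rightarrow$ \textup{(ii)} is automatic, since the annihilator $\nullsp(A)^\bot$ is a closed subspace of $V'$ (recorded in Section \ref{sec:00}). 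Symmetrically, \textup{(ii)} $\Rightarrow$ \textup{(iii)} follows by combining Theorem \ref{th:2.4a} (which gives $\ran(A)\supset\nullsp(A')^\bot$ once $\ran(A')$ is closed) with \eqref{eq:2.3c} (which gives $\ran(A)\subset\nullsp(A')^\bot$), and \textup{(iii)} $\Rightarrow$ \textup{(i)} holds because $\nullsp(A')^\bot$ is closed in $W'$. Chaining these with \textup{(i)} $\Leftrightarrow$ \textup{(ii)} yields the equivalence of all four statements.

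I do not anticipate a genuine obstacle, because all the substantive work already lives in Theorems \ref{th:2.1}, \ref{th:2.2}, \ref{th:2.4a}, and \ref{th:2.4}, which rest on Kato's reduced minimum modulus and the Hahn--Banach separation lemmas. The only points demanding care are: first, remembering to invoke the closedness of annihilators in order to pass from the \emph{identities} asserted in \textup{(iii)}, \textup{(iv)} to the \emph{topological} assertions \textup{(i)}, \textup{(ii)}; and second, pairing the dual statements correctly — Theorem \ref{th:2.4} with \eqref{eq:2.3b} on the $V'$ side, Theorem \ref{th:2.4a} with \eqref{eq:2.3c} on the $W'$ side — with the right space playing the role of ``$X$'' in each application. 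One could even omit the separate proof of \textup{(i)} $\Leftrightarrow$ \textup{(ii)}, as the cycle already contains it, but citing Theorem \ref{th:2.2} explicitly makes the logical structure more transparent.
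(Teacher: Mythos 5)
Your proposal is correct and follows essentially the same route as the paper: the equivalence (i)$\Leftrightarrow$(ii) via Theorems \ref{th:2.1} and \ref{th:2.2}, and the identities (iii), (iv) obtained by pairing Theorems \ref{th:2.4a} and \ref{th:2.4} with the unconditional inclusions of Lemma \ref{la:2.3}, with the converse directions coming from closedness of annihilators. The only cosmetic difference is that you arrange the implications as a cycle while the paper proves the pairwise equivalences (ii)$\Leftrightarrow$(iv) and (i)$\Leftrightarrow$(iii) directly.
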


\begin{proof}
(i) $\Leftrightarrow$(ii): We have already verified this part. See Theorems
 \ref{th:2.1} and \ref{th:2.2}.
 
\noindent (iv)$\Rightarrow$(ii):   
If $\ran (A')=\nullsp(A)^\bot$, then $\ran(A')$ is closed because
 $\nullsp(A)^\bot$ is closed.
 
\noindent (ii)$\Rightarrow$(iv):
 Assuming that $\ran(A')$ is closed,
 then we can apply Lemma \ref{la:2.3} and Theorem \ref{th:2.4} to deduce
 $\ran (A')=\nullsp(A)^\bot$.

\noindent (i)$\Leftrightarrow$(iii): It is exactly the same as that of the
 part ``(ii)$\Leftrightarrow$(iv)''.  
\end{proof}

\begin{remark}
 \label{rem:31}
In the discussion presented above, the boundedness of $A$ plays no
 essential role. All the theorems and their proofs remain valid
 for a closed linear operator $A$ if the dual operator $A'$ is well-defined.  
\end{remark}

\begin{remark}
 \label{rem:32}
The original version of Closed Range Theorem could be found in
 \cite[Theorems X.8, X.9]{ban87}. 
\end{remark}

\begin{remark}
\label{rem:77} 
In this section, we considered $A\in\mathcal{L}(V,W')$ with the intention of applying results to the proof of Theorems \ref{th:bnb} and \ref{th:5.1}. However, if we consider a linear densely defined closed operator $T$ from a Banach space $X$ to a Banach space $Y$, we can prove the following results in exactly the same way. In particular, $Y$ needs not to be reflexive.  
\begin{itemize}
 \item $\ran(T)$ is closed if and only if $\gamma(T)>0$. 
 \item $\gamma(T)=\gamma(T')$. 
 \item The following \textup{(i)--(iv)} are equivalent: 
 \begin{itemize}
  \item[\textup{(i)}] $\ran(T)$ is closed; 
  \item[\textup{(ii)}] $\ran(T')$ is closed;
\item[\textup{(iii)}] $\ran (T)=\nullsp(T')^\bot$; 
\item[\textup{(iv)}] $\ran (T')=\nullsp(T)^\bot$.
\end{itemize}
\end{itemize} 
\end{remark}

\section{Proof of Theorems \ref{th:bnb} and \ref{th:5.1}}
\label{sec:5}

It suffices to state the proof of Theorem \ref{th:5.1} because 
Theorems \ref{th:bnb} and \ref{th:5.1} are equivalent through the
relation \eqref{eq:a1}.
 
 \begin{proof}[Proof of Theorem \ref{th:5.1}, the part 
\textup{``(i) $\Leftrightarrow$ (iii)''}] 
\mbox{ }
  \begin{itemize}
  \item[] $A$: bijective
  \item[]  $\Rightarrow$ $\nullsp (A)=\{0\}$, $\ran(A)=W'$
  \item[]  $\Rightarrow$ $\nullsp (A)=\{0\}$, $\ran(A)\mbox{: closed}$, 
    $\nullsp(A')^\bot=\ran(A)=W'$ \hfill (by Corollary \ref{th:2.10})
\item[] $\Rightarrow$ $\mu(A)=\gamma(A)$, $\gamma(A)>0$, $\nullsp (A')=\{0\}$
 \hfill (by \eqref{eq:2.11a}, Th \ref{th:2.1})
\item[] $\Rightarrow$ $\mu(A)=\gamma(A)$, $\gamma(A)>0$, $\mu(A')=\gamma(A')$
\hfill (by \eqref{eq:2.11a})
\item[] $\Rightarrow$ $\mu(A)=\mu(A')>0$ \hfill   (by Theorem \ref{th:2.2})
\item[] $\Rightarrow$ $\nullsp (A)=\{0\}$, $\nullsp (A')=\{0\}$, $\gamma(A)=\gamma(A')>0$
\hfill (by \eqref{eq:2.11b}, \eqref{eq:2.11a})
\item[] $\Rightarrow$ $\nullsp (A)=\{0\}$, $\ran(A)\mbox{: closed}$,
	$\ran(A)=\nullsp(A')^\bot$
\item[] \mbox{ }\hfill (by Theorems \ref{th:2.1},
	\ref{th:2.2} and Corollary \ref{th:2.10})
\item[] $\Rightarrow$ $A\mbox{: bijective}$.  
  \end{itemize}
 \end{proof}
 
  \begin{proof}[Proof of Theorem \ref{th:5.1}, the part 
\textup{``(ii) $\Leftrightarrow$ (iii)''}] 
\mbox{ }
   \begin{itemize}
  \item[] $\mu(A)=\mu(A')>0$ 
  \item[] $\Rightarrow$ $\mu(A)>0$, $\nullsp(A')=\{0\}$ \hfill   (by \eqref{eq:2.11b})
  \item[] $\Rightarrow$ $\mu(A)>0$, $\nullsp(A)=\{0\}$,
	  $\gamma(A')=\mu(A')$ \hfill (by \eqref{eq:2.11a}, \eqref{eq:2.11b})
\item[] $\Rightarrow$ $\mu(A)>0$, $\gamma(A)=\mu(A)$,
	$\gamma(A')=\mu(A')$ \hfill {(by \eqref{eq:2.11a})}
\item[] $\Rightarrow$ $\mu(A)=\mu(A')>0$.  
   \end{itemize}
  \end{proof}

 \section{Application to evolution equations of parabolic type} 
\label{sec:7}

In this section, we present an application of Theorem \ref{th:bnb} to
evolution equations of parabolic type.

\subsection{Example}
\label{sec:e1}
We start with a concrete example. Letting $J=(0,T)$ with $T>0$, and supposing that $\Omega$ is a
 Lipschitz domain in $\mathbb{R}^d$, $d\ge 1$, we consider the initial-boundary
 value problem
 \begin{subequations} 
  \label{eq:cd}
\begin{align}
 \partial_tu&=\nabla\cdot \nu(x,t) \nabla u-\nabla \cdot (\bm{b}(x,t) u)
 && \nonumber \\
 & \mbox{ }\qquad\qquad\qquad -c(x,t)u+F(x,t)&& (x\in\Omega,~t\in J),\label{eq:cd1}\\
 u&=0 && (x\in\partial\Omega,~t\in J),\label{eq:cd2}\\
 u(x,0)&=u_0(x)  && (x\in\Omega),\label{eq:cd3}
\end{align}
 \end{subequations}
 where $\nu,\bm{b},c,F$ and $u_0$ are given functions.

Several frameworks and methods are used to establish the well-posedness
(the unique existence of a solution with a priori estimate) of
\eqref{eq:cd}:
\begin{itemize}
\item Semigroup method (\cite{paz83} for example); 
\item Variational method: Galerkin method based on compactness theorems 
(\cite{dl92} and
\cite{wlo87} for example); 
\item Variational method: Operator method (\cite{lm72} for
      example). 
\end{itemize}

As described, we present another variational method. To this end, we
first derive a weak formulation of \eqref{eq:cd}. For the time being, those
$\nu,\bm{b},c,F$ and $u_0$ are assumed to be suitably smooth as well as
a solution $u$. Set
  \[
   \mathcal{D}=\{v=\tilde{v}|_{J\times\Omega}\mid \tilde{v}\in
   C^\infty(\mathbb{R}\times
   \mathbb{R}^d),~\operatorname{supp}\tilde{v}\subset J\times \Omega\}.
  \]

 Multiplying both sides of \eqref{eq:cd1} by
 $v\in \mathcal{D}$, integrating it in $x\in \Omega$
 and $t\in J$ and using the boundary condition \eqref{eq:cd2}, 
 we obtain
  \begin{multline}
   \int_J\int_\Omega (\partial_tu) v~dxdt+
   \int_J\int_\Omega \left[\nu(x,t)\nabla u\cdot\nabla
   v-\bm{b}(x,t)u\cdot \nabla v+c(x,t)uv\right]~dxdt\\
   =\int_J\int_\Omega Fv~dxdt.
  \label{eq:cd10}
  \end{multline}

 We introduce
\begin{align*}
 & H=L^2(\Omega),&& (\cdot,\cdot)=(\cdot,\cdot)_H=(\cdot,\cdot)_{L^2(\Omega)},&& \|\cdot\|_H=\|\cdot\|_{L^2(\Omega)},\\
 & V=H^1_0(\Omega),&& (\cdot,\cdot)_V=(\nabla \cdot,\nabla \cdot)_{L^2(\Omega)}, &&\|\cdot\|_V=\|\nabla
 \cdot\|_{L^2(\Omega)}
\end{align*}
and 
\[
 \dual{\cdot,\cdot}=\dualV{\cdot,\cdot}= \mbox{the duality pairing
 between $V$
 and $V'$}.
 \]
Moreover, set 
\begin{align*}
 a(t;w,v)&=\int_\Omega \left[\nu(x,t)\nabla w\cdot\nabla
 v-\bm{b}(x,t)w\cdot \nabla v+c(x,t)wv\right]~dx,\\
 \dual{f,v}&=\int_\Omega Fv~dx
\end{align*}
for $w,v\in V$.

We make the following assumptions:  
\begin{subequations} 
\label{eq:aa}
\begin{gather}
\exists \nu_0>0,\quad \nu(x,t)\ge \nu_0>0\quad (x\in\Omega,~t\in J),\qquad \nu\in L^\infty(\Omega);\label{eq:aa1} \\
\bm{b}\in L^\infty(\Omega\times J)^d,\quad c\in L^\infty(\Omega\times J);\label{eq:aa2}\\ 
\exists c_0>0,\quad \frac12 \nabla\cdot \bm{b}(x,t)+c(x,t)\ge c_0>0\quad (x\in\Omega,~t\in J).\label{eq:aa3}
\end{gather}
\end{subequations}
 
 Using \eqref{eq:aa1}, \eqref{eq:aa2}, \eqref{eq:aa3} and Poincar{\' e} inequality
\[
 \|v\|_V\le C_{\textrm{P}}\|v\|_{H^1(\Omega)}\qquad (v\in V),
\]
one can prove that there exist positive constants $M$ and $\alpha$ which
depend only on $\nu_0$, $c_0$, $\|\nu\|_{L^\infty(\Omega)}$, $\|\bm{b}\|_{L^\infty(\Omega)^d}$,
$\|c\|_{L^\infty(\Omega)}$ and $C_{\textrm{P}}$ such that 
\begin{align*}
|a(t;w,v)|&\le M\|w\|_V\|v\|_V && (w,v\in V,~ t\in J),\\
a(t;v,v)&\ge \alpha \|v\|_V^2 && (v\in V,~ t\in J).
\end{align*}
 
Therefore, for a.e.~$t\in J$, we can introduce a linear operator $A(t)$
from $V$ into $V'$ as 
\begin{equation}
 \label{eq:at}
  \dual{A(t)w,v}=a(t;w,v)\qquad (w,v\in V,~t\in J)
\end{equation}
satisfying
\begin{subequations}
\begin{align}
& \dual{A(t)w,v}\le M\|w\|_V\|v\|_V && (w,v\in V,~t\in J),\label{eq:0a}\\
& \dual{A(t)v,v}\ge \alpha \|v\|_V^2 && (v\in V,~t\in J).\label{eq:0b}
\end{align}
\label{eq:00}
\end{subequations}

As a result, \eqref{eq:cd10} is expressed as
\begin{equation}
 \label{eq:cd11}
  \int_J\dual{\partial_t
  u,v}~dt+\int_J\dual{A(t)u,v}~dt=\int_J\dual{f,v}~dt\qquad (v \in \mathcal{D}). 
\end{equation}
However, the initial condition \eqref{eq:cd3} is interpreted as
\begin{equation}
 \label{eq:cd12}
  (u(0),v)=(u_0,v)\qquad (v\in H).
\end{equation}

At this stage, we introduce the following function spaces: 
\begin{align*}
&\mathcal{X}=L^2(J;V)\cap H^1(J;V'),&&   \|u\|_{\mathcal{X}}^2=\|u\|_{L^2(J;V)}^2+\|u'\|_{L^2(J;V')}^2,\\
&\mathcal{Y}_1=L^2(J;V), &&\|v_1\|_{\mathcal{Y}_1}^2=\|v_1\|_{L^2(J;V)}\\
&\mathcal{Y}=\mathcal{Y}_1 \times H, && \|v\|_{\mathcal{Y}}^2 =\|v_1\|_{L^2(J;V)}^2+\|v_2\|_H^2,
\end{align*}
where
\begin{align*}
 L^2(J;V)&=\{v:J\to V \mid \|v\|_{L^2(J;V)}<\infty\},
  && \|v\|_{L^2(J;V)}^2=\int_J\|v\|^2_V~dt, \\
 H^1(J;V')&=\{v:J\to V \mid \|v\|_{H^1(J;V')}<\infty\},
  && \|v\|_{H^1(J;V')}^2=\int_J (\|v\|^2_{V'}+\|v'\|^2_{V'})~dt.
\end{align*}
It is noteworthy that $\mathcal{D}$ is dense in $\mathcal{Y}_1$. 

We can state the weak formulation of \eqref{eq:cd} as follows. Assuming
\begin{equation}
 f\in L^2(J;V'),\qquad u_0\in H,
 \label{eq:data}
\end{equation}
we find $u\in \mathcal{X}$ such that
\begin{multline}
\underbrace{\int_J \left[
 \dual{u',v_1}+\dual{A(t)u,v_1}\right]~dt+(u(0),v_2)}_{=B(u,v)}
 \\
 =\int_J\dual{f,v_1}~dt + (u_0,v_2)\qquad (\forall v=(v_1,v_2)\in\mathcal{Y}),
\label{eq:1a}
\end{multline}
where $u'$ denotes $du(t)/dt$. 
Alternatively, \eqref{eq:1a} is expressed formally as
\begin{equation}
u'+A(t)u=f(t),\quad t\in J;\qquad u(0)=u_0. 
\label{eq:0}
\end{equation}

 \begin{remark}
  In \eqref{eq:data}, $f\in L^2(J;V')$ is guaranteed by assuming $F\in
  L^2(J;H)$. Moreover, $u(0)\in H$ is well-defined; see Lemma
  \ref{rem:1}. 
 \end{remark}


\subsection{Problem}
\label{sec:e2}

We consider more general settings.
Letting $H$ and $V$ be (real) Hilbert spaces such that $V\subset H$ is dense
with the continuous injection, then the inner product and norms are denoted
as $(\cdot,\cdot)=(\cdot,\cdot)_H$, 
$(\cdot,\cdot)_V$, $\|\cdot\|=\|\cdot\|_H$ and $\|\cdot\|_V$. The topological dual spaces
$H$ and $V$ are denoted, respectively, by $H'$ and $V'$.
As usual, we identify $H$ with $H'$ and consider the triple $V\subset H\subset V'$. Moreover,
$\dual{\cdot,\cdot}=\dualV{\cdot,\cdot}$ denotes 
duality pairing between $V'$ and $V$. Consider function spaces
$\mathcal{X}$, $\mathcal{Y}_1$ and $\mathcal{Y}$ as presented above. 

Supposing that, for a.e.~$t\in J$, we are given a linear operator $A(t)$
of $V\to V'$ satisfying \eqref{eq:00}, where $M$ and $\alpha$ are
positive constants independent of $t\in J$.
Without loss of generality, we assume that $\alpha\le 1\le M$. 
Given \eqref{eq:data}, we
consider the abstract evolution equation of parabolic type
\eqref{eq:1a}.  

The following result is called the trace theorem (see \cite[theorem XVIII-1]{dl92}, \cite[theorem 25.2]{wlo87}, \cite[theorem 41.15]{zen90}). 

\begin{lemma}
There exists a positive constant $C_{\mathrm{Tr},T}$ depending only on $T$ such that  
  \begin{equation}
\max_{t\in \overline{J}}\|v(t)\|_H \le C_{\mathrm{Tr},T}\|v\|_{\mathcal{X}}\qquad
 (v\in \mathcal{X}).
  \label{eq:ch}
  \end{equation}
In other words, the space $\mathcal{X}$ is embedded continuously in the
 set of $H$-valued continuous functions on $\overline{J}$. Particularly, $u(0)\in H$ in \eqref{eq:1a} is
   well-defined.  
\label{rem:1} 
\end{lemma}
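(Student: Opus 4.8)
The plan is to establish \eqref{eq:ch} first for functions that are smooth in the time variable and then to pass to the limit by density.

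\emph{Step 1 (a priori bound for smooth $v$).}
Let $v\in C^1(\overline{J};V)$. Since $V\subset H\subset V'$ with dense continuous injections and $H$ is identified with $H'$, the function $t\mapsto\|v(t)\|_H^2$ is of class $C^1$ on $\overline{J}$ and
\[
\frac{d}{dt}\|v(t)\|_H^2=2(v'(t),v(t))_H=2\dual{v'(t),v(t)} .
\]
Integrating from $s$ to $t$ with $s,t\in\overline{J}$, using $2|\dual{v',v}|\le\|v'\|_{V'}^2+\|v\|_V^2$ together with the very definition of $\|\cdot\|_{\mathcal{X}}$,
\[
\|v(t)\|_H^2\le\|v(s)\|_H^2+\int_J\big(\|v'(\tau)\|_{V'}^2+\|v(\tau)\|_V^2\big)\,d\tau=\|v(s)\|_H^2+\|v\|_{\mathcal{X}}^2 .
\]
Let $C_e>0$ be the norm of the embedding $V\hookrightarrow H$. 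Averaging the last inequality over $s\in J$ gives
\[
\|v(t)\|_H^2\le\frac1T\int_J\|v(s)\|_H^2\,ds+\|v\|_{\mathcal{X}}^2\le\Big(\frac{C_e^2}{T}+1\Big)\|v\|_{\mathcal{X}}^2 ,
\]
so \eqref{eq:ch} holds for $v\in C^1(\overline{J};V)$ with $C_{\mathrm{Tr},T}=(C_e^2/T+1)^{1/2}$, which depends only on $T$ (the spaces $V,H$ being fixed).

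\emph{Step 2 (density and conclusion).}
I would next show that $C^1(\overline{J};V)$ is dense in $\mathcal{X}$: given $v\in\mathcal{X}$, extend it in the time variable to $\hat v\in L^2(\hat J;V)\cap H^1(\hat J;V')$ on a larger open interval $\hat J\supset\overline{J}$ by the usual reflection/cutoff across the endpoints $0$ and $T$ (valid verbatim for Banach-space-valued Sobolev functions), and set $v_\varepsilon=\rho_\varepsilon*\hat v$ with $\rho_\varepsilon$ a standard mollifier in time; then $v_\varepsilon\in C^\infty(\overline{J};V)$, $v_\varepsilon'=\rho_\varepsilon*\hat v'$, and $v_\varepsilon\to v$ both in $L^2(J;V)$ and in $L^2(J;V')$ for the derivatives, i.e.\ $v_\varepsilon\to v$ in $\mathcal{X}$. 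Now take $v_n\in C^1(\overline{J};V)$ with $v_n\to v$ in $\mathcal{X}$. Applying Step 1 to $v_n-v_m$ shows $(v_n)$ is Cauchy in $C(\overline{J};H)$, hence converges uniformly to some $w\in C(\overline{J};H)$; since $v_n\to v$ also in $L^2(J;H)$ (by $V\hookrightarrow H$), we get $w=v$ a.e., so $v$ has the continuous representative $w$, and letting $n\to\infty$ in Step 1's estimate yields \eqref{eq:ch}. In particular $u(0)\in H$ in \eqref{eq:1a} is well-defined.

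\emph{Main obstacle.}
The only non-routine ingredient is the density claim of Step 2, which is precisely what legitimizes the chain-rule identity of Step 1 for every $v\in\mathcal{X}$; the delicate point is the time-extension of $\mathcal{X}$-functions across $t=0$ and $t=T$, done consistently in the $L^2(J;V)$- and the $H^1(J;V')$-senses at once, which reduces to the scalar case because it uses only completeness of $V$ and $V'$.
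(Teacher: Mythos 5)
Your argument is correct. Note, however, that the paper does not prove this lemma at all: it is quoted as the standard trace/embedding theorem with references to Dautray--Lions (Th.~XVIII-1), Wloka (Th.~25.2) and {\v Z}en{\'\i}{\v s}ek (Th.~41.15). What you supply is essentially the classical proof found in those sources: establish the estimate for time-smooth functions by differentiating $t\mapsto\|v(t)\|_H^2$, using Young's inequality and an average over $s\in J$ to absorb the ``initial'' term, and then transfer it to all of $\mathcal{X}$ by density of $C^1(\overline{J};V)$ (reflection across $t=0,T$ plus mollification in time), concluding via a Cauchy argument in $C(\overline{J};H)$. The density step you flag as the only delicate point is indeed the standard one and works exactly as you sketch for Bochner--Sobolev functions. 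Two small remarks: your constant $C_{\mathrm{Tr},T}=(1+C_e^2/T)^{1/2}$ depends also on the norm $C_e$ of the embedding $V\hookrightarrow H$, not literally only on $T$ --- this matches the paper's (slightly loose) statement only because $V$ and $H$ are regarded as fixed, as you correctly note; and the inequality of Step 1 should be read with the convention that the continuous representative of $v$ is meant on the left-hand side, which your uniform-limit argument does provide.
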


The main result of this section is the following result, which is often
called the Lions Theorem.

 \begin{theorem}
Given \eqref{eq:data}, problem \eqref{eq:1a} admits a unique solution  
$u\in\mathcal{X}$ that satisfies
\begin{equation}
 \|u\|_\mathcal{X}\le C\left(\|f\|_{L^2(J;V')}+\|u_0\|_H\right),
\label{eq:100}
\end{equation}
where $C$ denotes a positive constant depending only on $M$ and
$\alpha$.
 \label{th:lions}
 \end{theorem}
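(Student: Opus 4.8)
The plan is to verify the three conditions of the BNB Theorem (Theorem~\ref{th:bnb}) for the bilinear form $B$ on $\mathcal{X}\times\mathcal{Y}$ and the right-hand side $L(v)=\int_J\langle f,v_1\rangle\,dt+(u_0,v_2)$, since $\mathcal{X}$ is a Banach space and $\mathcal{Y}=\mathcal{Y}_1\times H$ is a reflexive (indeed Hilbert) Banach space. First I would check boundedness of $B$: using \eqref{eq:0a}, the Cauchy--Schwarz inequality in time, the continuity of the embedding $\mathcal{X}\hookrightarrow C(\overline{J};H)$ from Lemma~\ref{rem:1}, and the definition of the norms, one gets $|B(u,v)|\le C\|u\|_{\mathcal{X}}\|v\|_{\mathcal{Y}}$ with $C$ depending only on $M$ and $C_{\mathrm{Tr},T}$; similarly $L\in\mathcal{Y}'$. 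Next I would verify \eqref{eq:bnb2}, the ``nondegeneracy in $v$'' condition: if $v=(v_1,v_2)\in\mathcal{Y}$ satisfies $B(u,v)=0$ for all $u\in\mathcal{X}$, then first testing with $u\in\mathcal{D}$ (dense in $\mathcal{Y}_1$, with $u(0)=0$) forces $v_1+A(t)^{*}$-type relations to vanish, giving $v_1=0$ by a distributional argument together with coercivity, and then testing with a $u\in\mathcal{X}$ with $u(0)$ arbitrary in $H$ forces $v_2=0$; hence $v=0$.

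The crux is the inf--sup condition \eqref{eq:bnb1}: for each fixed $u\in\mathcal{X}$ I must exhibit $v=(v_1,v_2)\in\mathcal{Y}$ with
\[
B(u,v)\ \ge\ \beta\,\|u\|_{\mathcal{X}}\,\|v\|_{\mathcal{Y}},
\]
$\beta$ depending only on $\alpha$ and $M$. The standard construction is to take $v_2=u(0)$ and to build $v_1$ as a combination of three pieces designed to control the three contributions to $\|u\|_{\mathcal{X}}^2=\|u\|_{L^2(J;V)}^2+\|u'\|_{L^2(J;V')}^2$. Concretely: (a) testing with $v_1=u$ itself and using \eqref{eq:0b} together with the integration-by-parts identity $\int_J\langle u',u\rangle\,dt=\tfrac12(\|u(T)\|_H^2-\|u(0)\|_H^2)$ controls $\int_J\|u\|_V^2\,dt$ modulo the boundary term $\|u(0)\|_H^2$, which is exactly absorbed by the choice $v_2=u(0)$ and the $(u(0),v_2)=\|u(0)\|_H^2$ term; (b) to recover the $\|u'\|_{L^2(J;V')}$ part one tests against a ``dual'' element: let $w\in L^2(J;V)$ be the Riesz representative solving $(w(t),\cdot)_V=\langle u'(t),\cdot\rangle$ a.e., so $\|w\|_{L^2(J;V)}=\|u'\|_{L^2(J;V')}$ and $\int_J\langle u',w\rangle\,dt=\|u'\|_{L^2(J;V')}^2$, while $\int_J\langle A(t)u,w\rangle\,dt$ is bounded by $M\|u\|_{L^2(J;V)}\|u'\|_{L^2(J;V')}$; (c) form $v_1=\delta_1 u+\delta_2 w$ with small parameters $\delta_1,\delta_2>0$ chosen (depending only on $\alpha,M$) so that the cross terms are absorbed by Young's inequality and one obtains $B(u,v)\ge c_0\|u\|_{\mathcal{X}}^2$, while simultaneously $\|v\|_{\mathcal{Y}}\le C_1\|u\|_{\mathcal{X}}$; dividing gives the required lower bound with $\beta=c_0/C_1$.

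With \eqref{eq:bnb1} and \eqref{eq:bnb2} established, Theorem~\ref{th:bnb} (equivalence (ii)$\Leftrightarrow$(i)) yields a unique $u\in\mathcal{X}$ solving \eqref{eq:1a}, and Remark~\ref{rem:bnb1} gives $\|u\|_{\mathcal{X}}\le\beta^{-1}\|L\|_{\mathcal{Y}'}\le\beta^{-1}C_2\big(\|f\|_{L^2(J;V')}+\|u_0\|_H\big)$, which is exactly \eqref{eq:100} with $C=C_2/\beta$ depending only on $M$ and $\alpha$. The main obstacle I anticipate is the bookkeeping in step (b)--(c) of the inf--sup proof: one must be careful that the auxiliary function $w$ genuinely lies in $\mathcal{Y}_1=L^2(J;V)$ (this uses only that $u'\in L^2(J;V')$ and the Riesz isomorphism $V\to V'$ is an isometry under the chosen $V$-norm), and that the parameters $\delta_1,\delta_2$ can be fixed \emph{uniformly} in $t$ — which is precisely why the hypotheses \eqref{eq:00} demand $M,\alpha$ independent of $t$, and why we normalized $\alpha\le1\le M$. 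A secondary technical point is justifying the integration-by-parts formula $\int_J\langle u',u\rangle\,dt=\tfrac12\|u(T)\|_H^2-\tfrac12\|u(0)\|_H^2$ for $u\in\mathcal{X}$, which follows from Lemma~\ref{rem:1} and a density argument with smooth functions.
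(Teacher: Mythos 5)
Your overall strategy coincides with the paper's: verify \eqref{eq:101c}, \eqref{eq:101a}, \eqref{eq:101b} for $B$ on $\mathcal{X}\times\mathcal{Y}$ and invoke Theorem \ref{th:bnb} plus Remark \ref{rem:bnb1}. Where you differ is the inf--sup step: the paper takes $v=(A(t)^{-1}u'+u,\;u(0))$ and, via Lemma \ref{la:5.1} and the auxiliary norm $\vnorm{\cdot}_{\mathcal{X}}$, gets $B(u,v)\ge(\alpha/M^2)\vnorm{u}_{\mathcal{X}}^2$ and $\|v\|_{\mathcal{Y}}\le\alpha^{-1}\vnorm{u}_{\mathcal{X}}$ in two lines, yielding $\beta=\alpha^3/M^2$; you instead use the classical construction $v_1=\delta_1u+\delta_2w$ with $w$ the $V$-Riesz representative of $u'$, which is a legitimate alternative. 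However, your bookkeeping ``$B(u,v)\ge c_0\|u\|_{\mathcal{X}}^2$ and $\|v\|_{\mathcal{Y}}\le C_1\|u\|_{\mathcal{X}}$'' is not quite compatible with the stated constant dependence: since $v_2=u(0)$, bounding $\|v\|_{\mathcal{Y}}$ by $\|u\|_{\mathcal{X}}$ requires the trace inequality \eqref{eq:ch}, so $C_1$, hence $\beta$ and the final $C$ in \eqref{eq:100}, would depend on $T$ through $C_{\mathrm{Tr},T}$, whereas the theorem claims dependence on $M,\alpha$ only. The fix is easy and is exactly the device the paper builds into $\vnorm{\cdot}_{\mathcal{X}}$: your lower bound in fact controls $\|u(0)\|_H^2$ itself, e.g.\ $B(u,v)\ge\tfrac{\alpha}{2M^2}\bigl(\|u\|_{\mathcal{X}}^2+\|u(0)\|_H^2\bigr)$ while $\|v\|_{\mathcal{Y}}^2\le2\bigl(\|u\|_{\mathcal{X}}^2+\|u(0)\|_H^2\bigr)$, so dividing gives a $T$-free $\beta$ without ever invoking the trace constant (note that the paper uses only the $T$-free lower bound $\vnorm{u}_{\mathcal{X}}\ge\alpha\|u\|_{\mathcal{X}}$ of Lemma \ref{la:5.20} at this point).

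The genuine gap is in your treatment of \eqref{eq:101b}. Testing $B(u,v)=0$ with $u=\phi\tilde u$, $\phi\in C_0^\infty(J)$, yields only that $v_1\in H^1(J;V')$ with $v_1'=A(t)'v_1$; coercivity \eqref{eq:0b} then makes $t\mapsto\|v_1(t)\|_H$ nondecreasing, and this adjoint equation has nontrivial solutions in $L^2(J;V)\cap H^1(J;V')$ (take $A=-\Delta$ on $V=H_0^1(\Omega)$ and $v_1(t)=e^{\lambda_1t}\phi_1$ with the first Dirichlet eigenpair $(\lambda_1,\phi_1)$), so ``a distributional argument together with coercivity'' cannot force $v_1=0$. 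The missing ingredient is endpoint information: one must test with elements of $\mathcal{X}$ that do \emph{not} vanish at $t=T$ to extract the terminal condition $v_1(T)=0$ (the paper does this with $w=t\tilde w$, the integration-by-parts formula \eqref{eq:105}, and the choice $\tilde w=v_1(T)$); only then does the energy identity \eqref{eq:106} combined with \eqref{eq:0b} give $v_1\equiv0$, after which $v_2=0$ follows (or is obtained first, as in the paper, by localizing the test function near $t=0$). Density of $\mathcal{D}$ in $\mathcal{Y}_1$ is not the relevant point here; what matters is that the test class in $\mathcal{X}$ is rich enough to prescribe values at $t=T$.
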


To prove this theorem, it suffices to verify the following: 
\begin{subequations} 
 \label{eq:101}
 \begin{gather}
\exists \mu>0,\quad
  \sup_{u\in\mathcal{X},v\in\mathcal{Y}}\frac{B(u,v)}{\|u\|_{\mathcal{X}}\|v\|_{\mathcal{Y}}}=
  \mu; \label{eq:101c}\\
\exists \beta>0,\quad \inf_{u\in\mathcal{X}}\sup_{v\in\mathcal{Y}}\frac{B(u,v)}{\|u\|_{\mathcal{X}}\|v\|_{\mathcal{Y}}}= \beta ; \label{eq:101a}\\
v\in \mathcal{Y},\quad (\forall u\in\mathcal{X},\ B(u,v)=0)\quad
 \Longrightarrow \quad (v=0). \label{eq:101b}
 \end{gather}
\end{subequations}
Subsequently, we can apply Theorem \ref{th:bnb} to conclude a unique existence
of the solution $u$. Moreover, the a priori estimate \eqref{eq:100} is a
readily obtainable consequence of \eqref{eq:101a}.

\subsection{Proof of Theorem \ref{th:lions}}

We use the following auxiliary results. 
By virtue of \eqref{eq:00}, $A(t)$ is invertible for a.e. $t\in J$. 
Moreover, we have the following. 

\begin{lemma}
 \label{la:5.1}
\textup{(i)} $\displaystyle{\|A(t)^{-1}g\|_V\le \frac{1}{\alpha}\|g\|_{V'}}$
 for all $g\in V'$ and a.e.~$t\in J$.  \\
\textup{(ii)} $\displaystyle{\dual{g,A(t)^{-1}g}\ge \frac{\alpha}{M^2}\|g\|_{V'}}$ for all
 $g\in V'$ and a.e.~$t\in J$. 
\end{lemma}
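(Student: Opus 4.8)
The plan is to prove both estimates by exploiting the coercivity and boundedness of $A(t)$ together with elementary Hilbert-space (or duality) arguments, working at a fixed $t$ for which \eqref{eq:00} holds. Throughout I write $A=A(t)$ and suppress the $t$-dependence. By \eqref{eq:0b} and the Lax--Milgram theorem (Theorem \ref{th:lm}) applied to $a(t;\cdot,\cdot)$ on $V$, the operator $A:V\to V'$ is a bijection, so $A^{-1}:V'\to V$ is well defined; this is the invertibility already asserted before the lemma.

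For part (i), the plan is to set $v=A^{-1}g\in V$, so that $Av=g$, and test the coercivity inequality \eqref{eq:0b} against $v$ itself: $\alpha\|v\|_V^2\le\dual{Av,v}=\dual{g,v}\le\|g\|_{V'}\|v\|_V$. Dividing by $\|v\|_V$ (the case $v=0$ being trivial) gives $\alpha\|v\|_V\le\|g\|_{V'}$, i.e. $\|A^{-1}g\|_V\le\alpha^{-1}\|g\|_{V'}$. This step is completely routine.

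For part (ii), the plan is again to put $v=A^{-1}g$, so $\dual{g,A^{-1}g}=\dual{Av,v}\ge\alpha\|v\|_V^2$ by \eqref{eq:0b}. I then need to bound $\|v\|_V$ from below in terms of $\|g\|_{V'}$. For this I use the continuity bound \eqref{eq:0a}: for every $w\in V$, $\dual{g,w}=\dual{Av,w}\le M\|v\|_V\|w\|_V$, hence taking the supremum over $w$ with $\|w\|_V\le 1$ gives $\|g\|_{V'}\le M\|v\|_V$, i.e. $\|v\|_V\ge M^{-1}\|g\|_{V'}$. Combining, $\dual{g,A^{-1}g}\ge\alpha\|v\|_V^2\ge\alpha M^{-2}\|g\|_{V'}^2$. (Note: as written, statement (ii) has $\|g\|_{V'}$ rather than $\|g\|_{V'}^2$ on the right, which is a typo; the natural and scale-correct bound is $\dual{g,A(t)^{-1}g}\ge\frac{\alpha}{M^2}\|g\|_{V'}^2$, and that is what the proof yields.)

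There is no real obstacle here: both parts reduce to testing \eqref{eq:00} against the single vector $v=A(t)^{-1}g$ and reading off the inequalities, the only mild subtlety being the correct power of $\|g\|_{V'}$ in part (ii) and the fact that all estimates are uniform in $t$ because $\alpha$ and $M$ are. I would simply record the two one-line computations and remark that the constants are $t$-independent.
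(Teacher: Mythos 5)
Your proof is correct and is essentially identical to the paper's own argument: both parts set $v=A(t)^{-1}g$ and read off the conclusions from the coercivity bound \eqref{eq:0b} and the continuity bound \eqref{eq:0a}. You are also right that the stated inequality in (ii) should read $\dual{g,A(t)^{-1}g}\ge \frac{\alpha}{M^2}\|g\|_{V'}^2$; that squared version is what the paper's proof produces and what is used later in the proof of \eqref{eq:101a}.
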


\begin{proof}
\noindent \textup{(i)} For $g\in V$, set $v=A(t)^{-1}g\in V$. Then,
 $\displaystyle{\dual{g,A(t)^{-1}g}=\dual{A(t)v,v}\ge \alpha
\|v\|_V^2}$. However, $|\dual{g,A(t)^{-1}g}|\le
 \|g\|_{V'}\|A(t)^{-1}g\|_V=\|g\|_{V'}\|v\|_V$. Combining these, we have
 $\|A(t)^{-1}g\|_V=\|v\|\le (1/\alpha)\|g\|_{V'}$. 

 \noindent \textup{(ii)} \eqref{eq:0a} implies $\|A(t)v\|_{V'}\le
 M\|v\|_V$ for $v\in V$. Now, set $v=A(t)^{-1}g\in V$ for $g\in
 V'$. Then, $\|g\|_{V'}= \sup_{w\in V}|\dual{g,w}|/\|w\|_V=\sup_{w\in
 V}|\dual{A(t)v,w}|/\|w\|_V\le M\|v\|$. Combining this with $\displaystyle{\dual{g,A(t)^{-1}g}\ge \alpha
 \|v\|_V^2}$, we obtain the desired inequality. 
\end{proof}

We introduce an alternate norm of $\mathcal{X}$ as
\[
 \vnorm{w}_{\mathcal{X}}^2=\int_J\|w'(t)+A(t)w(t)\|_{V'}^2~dt+\|w(0)\|^2
\]
for $w\in\mathcal{X}$. 

\begin{lemma}
 \label{la:5.20}
Two norms $\|\cdot\|_{\mathcal{X}}$ and $\vnorm{\cdot}_{\mathcal{X}}$
 are equivalent in $\mathcal{X}$. In particular, 
 \[
\alpha\|w\|_{\mathcal{X}}\le \vnorm{w}_{\mathcal{X}}\le C_{\max}\|w\|_{\mathcal{X}}
 \]
for $w\in \mathcal{X}$, where $C_{\max}^2=1+M^2+C_{\mathrm{Tr},T}^2$. 
 \end{lemma}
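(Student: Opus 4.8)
The goal is to prove Lemma~\ref{la:5.20}: the two norms $\|\cdot\|_{\mathcal{X}}$ and $\vnorm{\cdot}_{\mathcal{X}}$ are equivalent on $\mathcal{X}$, with the explicit constants $\alpha\|w\|_{\mathcal{X}}\le \vnorm{w}_{\mathcal{X}}\le C_{\max}\|w\|_{\mathcal{X}}$ where $C_{\max}^2 = 1 + M^2 + C_{\mathrm{Tr},T}^2$.

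The plan is to prove the two inequalities separately by direct estimation, using only the definitions of the norms, the bounds \eqref{eq:00} on $A(t)$, the trace theorem (Lemma~\ref{rem:1}), and Lemma~\ref{la:5.1}. For the upper bound $\vnorm{w}_{\mathcal{X}}\le C_{\max}\|w\|_{\mathcal{X}}$, I would expand $\|w'(t)+A(t)w(t)\|_{V'}^2 \le 2\|w'(t)\|_{V'}^2 + 2\|A(t)w(t)\|_{V'}^2$ pointwise; actually it is cleaner to use $\|w'(t)+A(t)w(t)\|_{V'} \le \|w'(t)\|_{V'} + \|A(t)w(t)\|_{V'} \le \|w'(t)\|_{V'} + M\|w(t)\|_V$ (the latter from \eqref{eq:0a}, as recorded in the proof of Lemma~\ref{la:5.1}(ii)), then square and integrate. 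Using $(a+b)^2 \le $ an appropriately weighted bound, or simply integrating $\|w'\|_{V'}^2 + M^2\|w\|_V^2$ after a Cauchy--Schwarz-type step, together with $\|w(0)\|^2 \le C_{\mathrm{Tr},T}^2\|w\|_{\mathcal{X}}^2$ from \eqref{eq:ch}, yields $\vnorm{w}_{\mathcal{X}}^2 \le (1+M^2)\|w\|_{\mathcal{X}}^2 + C_{\mathrm{Tr},T}^2\|w\|_{\mathcal{X}}^2 = C_{\max}^2\|w\|_{\mathcal{X}}^2$. One has to be slightly careful to get exactly the constant $1+M^2$ rather than something larger; the trick is to note $\int_J\|w'+A(t)w\|_{V'}^2\,dt \le \int_J (\|w'\|_{V'}+M\|w\|_V)^2\,dt$ and then apply the elementary inequality $(\|w'\|_{V'}+M\|w\|_V)^2 \le (1+M^2)(\|w'\|_{V'}^2/1 + \|w\|_V^2)$... hmm, that gives $(1+M^2)$ times... let me just say: by Cauchy--Schwarz on vectors $(\|w'\|_{V'}, M\|w\|_V)$ and $(1,1)$ one gets the clean bound, and the details are routine.

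For the lower bound $\alpha\|w\|_{\mathcal{X}} \le \vnorm{w}_{\mathcal{X}}$, the key observation is to set $g(t) = w'(t) + A(t)w(t) \in V'$ for a.e.\ $t$, so that $w(t)$ solves the linear ODE $w' + A(t)w = g$, $w(0) = w(0)$. The hard part will be controlling $\|w\|_{L^2(J;V)}$ and $\|w'\|_{L^2(J;V')}$ in terms of $\|g\|_{L^2(J;V')}$ and $\|w(0)\|_H$. The natural route is a standard energy estimate: test the equation $w'+A(t)w = g$ against $w(t)$ to get $\frac12\frac{d}{dt}\|w(t)\|_H^2 + \dual{A(t)w,w} = \dual{g,w}$, use coercivity \eqref{eq:0b} to get $\dual{A(t)w,w}\ge \alpha\|w\|_V^2$, bound $\dual{g,w}\le \|g\|_{V'}\|w\|_V \le \frac{\alpha}{2}\|w\|_V^2 + \frac{1}{2\alpha}\|g\|_{V'}^2$ by Young's inequality, integrate over $J$, and conclude $\|w(t)\|_H^2 + \alpha\int_0^t\|w\|_V^2 \le \|w(0)\|_H^2 + \frac1\alpha\int_0^t\|g\|_{V'}^2$. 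This controls $\|w\|_{L^2(J;V)}^2 \le \frac1\alpha\|w(0)\|_H^2 + \frac1{\alpha^2}\|g\|_{L^2(J;V')}^2$. Then $\|w'\|_{V'} = \|g - A(t)w\|_{V'} \le \|g\|_{V'} + M\|w\|_V$ gives $\|w'\|_{L^2(J;V')}$ in terms of the same quantities. Assembling and using $\alpha\le 1\le M$ to absorb constants, one gets $\|w\|_{\mathcal{X}}^2 \le C/\alpha^2 \cdot \vnorm{w}_{\mathcal{X}}^2$ for a universal $C$; sharpening the bookkeeping (or just accepting the stated clean form $\alpha\|w\|_{\mathcal{X}}\le \vnorm{w}_{\mathcal{X}}$, which is what the lemma claims) is the delicate point. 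I expect the main obstacle to be this energy estimate — specifically, justifying the integration by parts $\int_0^t\dual{w',w}\,dt = \frac12(\|w(t)\|_H^2 - \|w(0)\|_H^2)$, which for $w\in\mathcal{X} = L^2(J;V)\cap H^1(J;V')$ is exactly the content of the trace theorem / a density argument (Lemma~\ref{rem:1} and the standard lemma that $\frac{d}{dt}\|w\|_H^2 = 2\dual{w',w}$ in the distributional sense on $J$); I would cite this rather than reprove it, and then the rest is routine Gronwall-free computation since the left-hand side already has the good sign.

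Finally, I would combine the two inequalities to state equivalence, and remark that equivalence of the norms means $(\mathcal{X}, \vnorm{\cdot}_{\mathcal{X}})$ is also a Banach space, which is what will be needed when verifying the inf--sup conditions \eqref{eq:101} for the bilinear form $B$. The whole proof is short modulo the cited trace/integration-by-parts fact; no genuinely new idea is required beyond the classical parabolic energy estimate.
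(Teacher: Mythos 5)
Your upper bound is exactly the paper's argument: pointwise in $t$ one uses $\|w'+A(t)w\|_{V'}\le\|w'\|_{V'}+M\|w\|_V$ from \eqref{eq:0a}, the elementary inequality $(a+Mb)^2\le(1+M^2)(a^2+b^2)$, and the trace estimate \eqref{eq:ch} for $\|w(0)\|^2$, giving $C_{\max}^2=1+M^2+C_{\mathrm{Tr},T}^2$; that part is fine. For the lower bound you take a genuinely different route: the paper argues pointwise, bounding $\|w'+Aw\|_{V'}$ from below by duality, whereas you set $g=w'+A(t)w$ and run the global parabolic energy estimate (test with $w$, coercivity \eqref{eq:0b}, Young's inequality, the integration-by-parts identity \eqref{eq:106}), then recover $w'$ via $\|w'\|_{V'}\le\|g\|_{V'}+M\|w\|_V$. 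This does prove equivalence of the two norms, but your bookkeeping claim is off in one respect: the constant you end with is not ``universal times $\alpha^{-2}$''. The step $\|w'\|_{V'}\le\|g\|_{V'}+M\|w\|_V$ injects a factor $M^2$, so what the energy route actually yields is $\|w\|_{\mathcal{X}}^2\le C\,(M^2/\alpha^2)\,\vnorm{w}_{\mathcal{X}}^2$, i.e.\ a lower equivalence constant of order $\alpha/M$, and $M$ cannot be absorbed into an absolute constant.

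More importantly, the ``delicate point'' you flag --- reaching the clean constant $\alpha$ --- cannot be repaired, because the explicit constant in the statement is not correct in general, and the paper's own derivation of it is invalid: after testing $w'+Aw$ with $v=w$ it replaces $\dual{w',w}/\|w\|_V$ by $\|w'\|_{V'}$, which is an inequality in the wrong direction and ignores that $\dual{w',w}$ may be negative. Concretely, take $V=H=V'=\mathbb{R}^2$ with the Euclidean norm and $A(t)\equiv\left(\begin{smallmatrix}\alpha & -K\\ K & \alpha\end{smallmatrix}\right)$ with $K$ large, so that \eqref{eq:0a}--\eqref{eq:0b} hold with this $\alpha$ and $M=\sqrt{\alpha^2+K^2}$, and let $w(t)=e^{-tA}w_0$. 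Then $w'+Aw\equiv 0$, so $\vnorm{w}_{\mathcal{X}}=\|w_0\|$, while $\|w(t)\|=e^{-\alpha t}\|w_0\|$ and $\|w'(t)\|=Me^{-\alpha t}\|w_0\|$ give $\|w\|_{\mathcal{X}}^2=(1+M^2)\frac{1-e^{-2\alpha T}}{2\alpha}\|w_0\|^2$; hence $\alpha\|w\|_{\mathcal{X}}\le\vnorm{w}_{\mathcal{X}}$ fails once $M$ is large, and any valid lower constant must degenerate at least like $\sqrt{\alpha}/M$. So your energy-estimate route is in fact the sound one: it proves $c(\alpha,M)\,\|w\|_{\mathcal{X}}\le\vnorm{w}_{\mathcal{X}}\le C_{\max}\|w\|_{\mathcal{X}}$ with $c(\alpha,M)\sim\alpha/M$, which is all that is needed downstream --- in the verification of \eqref{eq:101a} the inf--sup constant simply becomes $\alpha^2c(\alpha,M)/M^2$ instead of $\alpha^3/M^2$, still positive and depending only on $\alpha$ and $M$.
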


  \begin{proof}
   For $w\in\mathcal{X}$, we calculate as 
\begin{align*}
 \vnorm{w}_{\mathcal{X}}^2
 &=\int_J\left[\sup_{v\in V}
 \frac{\dual{w'+Aw,v}}{\|v\|_V}\right]^2dt+\|w(0)\|^2\\
 &= \int_J\left[\|w'\|_{V'}+\sup_{v\in V}
 \frac{\dual{Aw,v}}{\|v\|_V}\right]^2dt+\|w(0)\|^2\\
 &\le\int_J\left[\|w'\|_{V'}+M\|w\|_V\right]^2dt+C_{\mathrm{Tr},T}^2\|w\|_{\mathcal{X}}^2\\ 
 &\le (1+M^2+C_{\mathrm{Tr},T}^2)\|w\|_{\mathcal{X}}^2 
\end{align*}
   and
\[
  \vnorm{w}_{\mathcal{X}}^2
 \ge \int_J\left[\|w'\|_{V'}+
 \frac{\dual{Aw,w}}{\|w\|_V}\right]^2dt
 \ge\int_J\left[\|w'\|_{V'}+\alpha\|w\|_V\right]^2dt 
 \ge \alpha^2\|w\|_{\mathcal{X}}^2. 
\]
\end{proof}

The following lemma can be found in 
\cite[Theorem 2, \S XVIII-1]{dl92} and 
\cite[Theorem 41.15]{zen90}. 

\begin{lemma}
 \label{la:5.2}
 For $w,v\in \mathcal{X}$, we have
\begin{subequations} 
 \begin{equation}
 \label{eq:105}
  \int_J \dual{w',v}~dt=(w(T),v(T))-(w(0),v(0))-\int_J\dual{v',w}~dt
 \end{equation} 
 and
  \begin{equation}
   \int_J\dual{w',w}~dt=\frac{1}{2}\left(\|w(T)\|^2-\|w(0)\|^2\right)\ge
    -\frac12 \|w(0)\|^2.
 \label{eq:106}
  \end{equation}
\end{subequations}
\end{lemma}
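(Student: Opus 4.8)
The plan is to establish \eqref{eq:105} first on a dense subspace of smooth functions, where it reduces to an elementary product rule, and then to pass to the limit, using the trace bound of Lemma \ref{rem:1} to control the endpoint terms $(w(T),v(T))$ and $(w(0),v(0))$ and the Cauchy--Schwarz inequality to control the two integrals. Once \eqref{eq:105} holds on all of $\mathcal{X}$, \eqref{eq:106} is immediate: taking $v=w$ turns the right-hand side of \eqref{eq:105} into $\|w(T)\|^2-\|w(0)\|^2-\int_J\dual{w',w}\,dt$, and solving for $\int_J\dual{w',w}\,dt$ gives the identity, whence the lower bound $-\frac12\|w(0)\|^2$.

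\emph{Smooth functions.} For $w,v\in C^1(\overline{J};V)$ the scalar function $t\mapsto (w(t),v(t))$ is continuously differentiable on $\overline{J}$; writing its difference quotient as $(w(t+h)-w(t),v(t+h))+(w(t),v(t+h)-v(t))$ and letting $h\to0$ shows that its derivative is $(w'(t),v(t))+(w(t),v'(t))$. In the Gelfand triple $V\subset H\subset V'$, the identification of $H$ with $H'$ gives $(\varphi,\psi)=\dual{\varphi,\psi}$ whenever $\varphi\in H$ and $\psi\in V$, so this derivative equals $\dual{w'(t),v(t)}+\dual{v'(t),w(t)}$; for $w\in C^1(\overline{J};V)$ the classical derivative coincides with the distributional derivative $w'\in L^2(J;V')$. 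Integrating over $J$ and rearranging yields \eqref{eq:105} for $w,v\in C^1(\overline{J};V)$.

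\emph{Density and passage to the limit.} The space $C^1(\overline{J};V)$ is dense in $\mathcal{X}$; this is the only technical point, and it follows from a standard argument: extend $w\in\mathcal{X}$ to a slightly larger interval (for instance by reflection across $t=0$ and $t=T$, which preserves membership in $L^2(J;V)\cap H^1(J;V')$) and convolve in time with a mollifier, letting the mollification parameter tend to $0$ (see \cite{dl92}, \cite{wlo87}, \cite{zen90}). Given $w,v\in\mathcal{X}$, choose $w_n,v_n\in C^1(\overline{J};V)$ with $w_n\to w$ and $v_n\to v$ in $\mathcal{X}$. Splitting
\[
\int_J\dual{w_n',v_n}\,dt-\int_J\dual{w',v}\,dt=\int_J\dual{w_n',v_n-v}\,dt+\int_J\dual{w_n'-w',v}\,dt
\]
and bounding the terms by $\|w_n'\|_{L^2(J;V')}\|v_n-v\|_{L^2(J;V)}$ and $\|w_n'-w'\|_{L^2(J;V')}\|v\|_{L^2(J;V)}$ shows $\int_J\dual{w_n',v_n}\,dt\to\int_J\dual{w',v}\,dt$, and the same estimate handles $\int_J\dual{v_n',w_n}\,dt$. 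For the endpoint terms, Lemma \ref{rem:1} gives $\max_{t\in\overline{J}}\|w_n(t)-w(t)\|\le C_{\mathrm{Tr},T}\|w_n-w\|_{\mathcal{X}}\to0$ and likewise for $v_n$, so $(w_n(T),v_n(T))\to(w(T),v(T))$ and $(w_n(0),v_n(0))\to(w(0),v(0))$. Passing to the limit in \eqref{eq:105} written for $w_n,v_n$ gives \eqref{eq:105} for all $w,v\in\mathcal{X}$, and \eqref{eq:106} follows as explained above. The main obstacle is the density statement; everything else is a routine limiting argument.
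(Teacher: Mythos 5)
Your proposal is correct. Note that the paper itself gives no proof of this lemma: it simply cites \cite[Theorem 2, \S XVIII-1]{dl92} and \cite[Theorem 41.15]{zen90}, and your argument (product rule for $C^1(\overline{J};V)$ functions, density of such functions in $\mathcal{X}$ via extension and mollification, passage to the limit using Cauchy--Schwarz for the integrals and the trace bound \eqref{eq:ch} for the endpoint terms, then $v=w$ for \eqref{eq:106}) is exactly the standard proof found in those references. Within the paper's structure there is no circularity in invoking Lemma \ref{rem:1}, since the trace theorem is likewise taken as a cited, stand-alone result; if one wanted a fully self-contained treatment, one would prove \eqref{eq:ch} by the same density argument first.
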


Now we can state the following proof.

  \begin{proof}[Proof of \eqref{eq:101c}]
We apply the Cauchy--Schwarz inequality and Lemma
 \ref{la:5.20} to obtain  
\begin{align*}
 B(u,v)
 & =\int_J\left[\dual{u',v_1}+\dual{Au,v_1} \right]~dt +(u(0),v_2) \\
 & =\int_J\dual{u'+Au,v_1}~dt +(u(0),v_2) \\
 &
 \le \vnorm{u}_{\mathcal{X}}\|v\|_{\mathcal{Y}}\le C_{\max}\|u\|_{\mathcal{X}}\|v\|_{\mathcal{Y}}
\end{align*}
for $u\in \mathcal{X}$ and $v=(v_1,v_2)\in \mathcal{Y}$.
\end{proof}

\begin{proof}[Proof of \eqref{eq:101a}]
Let $u\in \mathcal{X}$ be arbitrary. Set $v_1=A(t)^{-1}u'+u\in L^2(J;V)$,
 $v_2=u(0)\in H$ and $v=(v_1,v_2)\in\mathcal{Y}$. Using Lemma
 \ref{la:5.1}, we have
 \begin{align*}
   \|v\|_{\mathcal{Y}}^2
 &=\int_J\|A^{-1}u'+u\|_{V}^2~dt+\|u(0)\|^2\\
 &=\int_J\|A^{-1}(u'+Au)\|_{V}^2~dt+\|u(0)\|^2\\
 &\le\frac{1}{\alpha^2}\int_J\|u'+Au\|_{V}^2~dt+\|u(0)\|^2 \le
 \frac{1}{\alpha^2} \vnorm{u}_{\mathcal{X}}^2.
 \end{align*}
 Moreover,
 \begin{align*}
  B(u,v)
 &=\int_J\dual{u'+Au,A^{-1}u'+u}~dt +(u(0),u(0))\\
 &=\int_J\dual{u'+Au,{A^{-1}}(u'+Au)}~dt +\|u(0)\|^2\\
 &\ge \frac{\alpha}{M^2} \int_J\|u'+Au\|_{V'}^2~dt +\|u(0)\|^2\\
 &\ge \frac{\alpha}{M^2} \vnorm{u}_{\mathcal{X}}^2\ge
 \frac{\alpha^2}{M^2} \vnorm{u}_{\mathcal{X}}\|v\|_{\mathcal{Y}}.
 \end{align*}
Consequently, using Lemma \ref{la:5.20}, we obtain 
 \[
  B(u,v)\ge \frac{\alpha^3}{M^2}\|u\|_{\mathcal{X}}\|v\|_{\mathcal{Y}},
  \]
 which implies \eqref{eq:101a}. 
\end{proof}

\begin{proof}[Proof of \eqref{eq:101b}]
Assume that $v=(v_1,v_2)\in\mathcal{Y}$ satisfies $B(u,v)=0$ for all
 $u\in \mathcal{X}$. That is, we assume that  
 \begin{equation}
  \int_J[\dual{u',v_1}+\dual{Au,v_1}]~dt+(u(0),v_2)=0\qquad (\forall
   u\in \mathcal{X}).
  \label{eq:151}
 \end{equation}
For any $\ep>0$, we take $u^*\in\mathcal{X}$ such that $u(0)=v_2$ and
 $u(t)=0$ for $t\ge \ep$. Substituting $u=u^*$ for \eqref{eq:151}, we
 have
 \[
 \int_0^\ep[\dual{{u^*}',v_1}+\dual{Au^*,v_1}]~dt+\|v_2\|^2=0.
 \]
Because $\ep$ is arbitrarily chosen, we infer that $v_2=0$.
Moreover, we have $v_1\in H^1(J;V')$. In fact, letting
 $u=\tilde{u}\phi\in\mathcal{X}$ 
 with $\tilde{u}\in V$ and $\phi\in C_0^\infty(J;\mathbb{R})$, we have
\[
 \int_J\dual{\phi'\tilde{u},v_1}~dt =-\int_J \dual{A\phi\tilde{u},v_1}~dt.
\]
This result implies that 
 \[
\left\langle \int_0^T v_1 \phi'~dt,\tilde{u}\right\rangle =\left\langle -\int_0^T A'v_1\phi~dt
 ,\tilde{u}\right\rangle
 \]
Consequently, we deduce $v_1'\in L^2(J;V')$ and $v_1'=A(t)'v_1$.

 Therefore, we can apply \eqref{eq:105} to obtain
  \begin{equation*}
  \int_J[-\dual{v_1',\psi}+\dual{A'v_1,\psi}]~dt=0\qquad (\forall
   \psi\in C_0^\infty(J;V)).
 \end{equation*}
 Because $C_0^\infty(J;V)$ is dense in ${L^2(J;V)}$, this gives
   \begin{equation}
  \int_J[-\dual{v_1',w}+\dual{A'v_1,w}]~dt=0\qquad (\forall
   w\in L^2(J;V)).
  \label{eq:152}
   \end{equation}
Letting $\tilde{w}\in V$ arbitrarily and substituting $w=t\tilde{w}$ for \eqref{eq:152}, we
 have
 \[
  \int_J[-\dual{v_1',t\tilde{w}}+\dual{A'v_1,t\tilde{w}}]~dt=0.
 \]
 Again we apply \eqref{eq:105} to obtain
\[
 -(T\tilde{w},v_1(T))+\int_J\dual{(t\tilde{w})',v_1}+\int_J \dual{A(t)(t\tilde{w}),v_1}~dt=0.
\]
Choosing $\tilde{w}=v_1(T)$ and using \eqref{eq:151}, we obtain $v_1(T)=0$.

 At this stage, substituting $w=v_1$ for \eqref{eq:152} and using
 \eqref{eq:0b} and \eqref{eq:106}, then we have
\[
 -\frac{1}{2}\|v_1(T)\|^2+\frac{1}{2}\|v_1(0)\|^2+\alpha
 \int_J\|v_1\|_V^2~dt\le 0.
\]
This result implies that $v_1=0$, which completes the proof.
\end{proof}

\begin{remark}
The case $u_0=0$ is described explicitly
in \cite{eg04}.
\end{remark}

\appendix

\section{Proof of ``\eqref{eq:pf10} $\Rightarrow$ \eqref{eq:pf11}''}
\label{sec:a}

We prove a more general lemma described below.    

\begin{lemma}
Let $T$ be a linear closed operator of a Banach space $X$ to a
 (possibly another) Banach
 space $Y$. Then,
\begin{equation}
 \label{eq:a101}
  \ball{Y}{r}\subset \overline{T\ball{X}{1}}\mbox{ with }r>0 
\end{equation}
implies that 
 \begin{equation}
 \label{eq:a102}
 \ball{Y}{r}\subset T\ball{X}{1}.
 \end{equation}
\label{la:aaa}
\end{lemma}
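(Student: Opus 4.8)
The plan is to run the classical ``successive approximation'' argument that converts an approximate covering $\ball{Y}{r}\subset\overline{T\ball{X}{1}}$ into an exact one. First I would observe that by homogeneity \eqref{eq:a101} implies $\ball{Y}{\rho r}\subset\overline{T\ball{X}{\rho}}$ for every $\rho>0$; equivalently, for every $y\in Y$ and every $\ep>0$ there exists $x\in X$ with $\|x\|_X\le\|y\|_Y/r$ and $\|y-Tx\|_Y<\ep$. I would fix $y\in\ball{Y}{r}$, choose a number $\theta\in(0,1)$ with $\|y\|_Y<\theta r$, and set $\delta_n=\theta^{n}(\theta r-\|y\|_Y)$ (or any summable sequence of the same flavour). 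Then I would construct inductively a sequence $x_n\in X$ such that $\|x_n\|_X\le \theta^{\,n-1}\|y\|_Y/r\cdot(\text{small factor})$ and $\bigl\|y-T(x_1+\dots+x_n)\bigr\|_Y<\delta_n\to 0$: having chosen $x_1,\dots,x_{n-1}$, apply the approximate-covering property to the residual $y-T(x_1+\dots+x_{n-1})$, whose norm is $<\delta_{n-1}$, to get $x_n$ with $\|x_n\|_X< \delta_{n-1}/r$ and residual norm $<\delta_n$.

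Next I would check that $\sum_n x_n$ converges absolutely in $X$: the norm bounds give $\sum_n\|x_n\|_X\le \|y\|_Y/r+\sum_{n\ge1}\delta_n/r$, and by the choice of $\delta_n$ as a geometric-type series this sum is strictly less than $1$ — this is exactly where the slack between $\|y\|_Y$ and $r$ is spent. Let $x=\sum_n x_n$, so $\|x\|_X<1$, i.e. $x\in\ball{X}{1}$, and let $s_n=x_1+\dots+x_n\to x$ in $X$. We have $Ts_n\to y$ in $Y$ by construction.

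The one genuinely non-routine point — and the reason the lemma is stated for closed $T$ rather than only bounded $T$ — is passing to the limit: from $s_n\to x$ and $Ts_n\to y$ I want to conclude $x\in\domain(T)$ and $Tx=y$. For bounded $T$ this is immediate; in general it is precisely the closedness hypothesis \eqref{eq:graph2} (equivalently the completeness of $\domain(T)$ under the graph norm, Lemma \ref{la:cgt}) that delivers $x\in\domain(T)$ and $Tx=y$. Hence $y=Tx\in T\ball{X}{1}$, proving \eqref{eq:a102}. I would remark at the end that in the application to \eqref{eq:pf10}$\Rightarrow$\eqref{eq:pf11} the operator $A\in\mathcal{L}(V,W')$ is bounded, so closedness is automatic and the limiting step is trivial; the main obstacle in the general statement is therefore entirely bookkeeping — choosing the $\delta_n$ so that the total $X$-norm stays below $1$ while the residuals still tend to $0$.
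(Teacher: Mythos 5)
Your proposal is correct and follows essentially the same successive-approximation argument as the paper's proof of Lemma \ref{la:aaa}: iterate the approximate covering at geometrically shrinking scales, sum the absolutely convergent series, and invoke closedness of $T$ to pass to the limit; the paper merely packages the slack differently, first proving $\ball{Y}{r}\subset T\ball{X}{1+\sigma}$ for every $\sigma>0$ with the uniform choice $\ep=\sigma/(2+\sigma)$ and then rescaling ($\ball{Y}{r'}=\tfrac{r'}{r}\ball{Y}{r}\subset T\ball{X}{1}$ for $r'<r$), whereas you spend the slack $r-\|y\|_Y$ directly. One small bookkeeping point: with your specific choice $\delta_n=\theta^{n}(\theta r-\|y\|_Y)$ the requirement is $\sum_{n\ge1}\delta_n<r-\|y\|_Y$, which holds only for $\theta$ sufficiently close to $\|y\|_Y/r$, not for every $\theta\in(\|y\|_Y/r,1)$ --- but, as you yourself note, any summable sequence with total below the slack does the job.
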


Recall that $\ball{Y}{r}=\{y\in Y\mid
    \|y\|_Y<r\}$ and $\overline{T\ball{X}{1}}$ denotes the closure
    of $T\ball{X}{1}=\{Tx\in Y\mid x\in \ball{X}{1}\}$ in $Y$. 
    To show the lemma, we apply a standard argument usually used to prove Open Mapping Theorem or Closed Graph Theorem.   
    
\begin{proof}
Assume that \eqref{eq:a101} is satisfied.  
Let $\sigma>0$ be arbitrary. For the time being, we admit that   
\begin{equation}
 \label{eq:a109}
\ball{Y}{r}\subset T\ball{X}{1+\sigma}. 
\end{equation}
Then, for any $0<r'<r$, choosing $\sigma=r/r'-1>0$, we
 have 
\[
  \ball{Y}{r'}
 =\frac{r'}{r}\ball{Y}{r}
 \subset \frac{r'}{r}T\ball{X}{1+\sigma}=T\ball{X}{1}.
\]
The relation \eqref{eq:a102} is a readily obtainable consequence of this relation.  

We now verify that \eqref{eq:a109} is true; we will show that, for any $y\in \ball{Y}{r}$, there exists an
 $x\in \ball{X}{1+\sigma}$ satisfying $Tx=y$.
 
As just remarked above, \eqref{eq:a101} gives
 \begin{equation}
 \label{eq:a106}
 \ball{Y}{\lambda r}\subset \overline{T\ball{X}{\lambda}} 
\end{equation}
for any $\lambda>0$.


 Set $\ep=\sigma/(2+\sigma)<1$.  
According to \eqref{eq:a101}, there is a $y_0\in T\ball{X}{1}$
   satisfying $\|y-y_0\|_Y< \ep r$. That is, there is a $\xi_0\in \ball{X}{1}$ satisfying
  \[
   \|y-T\xi_0\|_Y< \ep r.
  \]
Then, we apply \eqref{eq:a106} with $\lambda=\ep$. Because
   $y-T\xi_0\in\ball{Y}{\ep r}$, there is a $\xi_1\in
  \ball{X}{\ep}$ satisfying
\[
   \|y-T\xi_0-T\xi_1\|_X< \ep^2r.
\]
Proceeding in this way, we can construct a sequence $\{\xi_n\}_{n\ge 0}$
   in $X$ with the properties
\[
 \|y-T\xi_0-T\xi_1-\cdots-T\xi_n\|_Y< \ep^{n+1}r,\quad \|\xi_n\|_X< \ep^n.
\]  
If we set $x_n=\xi_0+\xi_1+\cdots+\xi_n$, we have
\[
   \|x_{n+m}-x_{n}\|_X
   \le
   \sum_{j=n+1}^{n+m}\|\xi_{j}\|_X\le \sum_{j=n+1}^{n+m}\ep^{j} \le
   \frac{\ep^{n+1}}{1-\ep}\to 0\quad (n\to\infty).
\]
Therefore, there exists an $x\in X$ satisfying $x_n\to x$ in $X$ as
   $n\to\infty$. Moreover, we have $\|y-Tx_n\|_Y<\ep^{n+1}r\to 0$ as
   $n\to\infty$. This implies that $Tx=y$ because $T$ is
   closed. Finally,
 \[
  \|x\|_X\le \sum_{n=0}^\infty\|\xi_n\|_X\le
   \sum_{n=0}^\infty\ep^n
   =\frac{1}{1-\ep}=1+\frac{\sigma}{2}<1+\sigma;
 \]
therefore, we have $x\in\ball{X}{1+\sigma}$. This completes the proof of
 \eqref{eq:a109}. 
   
  \end{proof}

\section{Comments on the revised version}
\label{sec:b}

\begin{enumerate}
 \item Open Mapping Theorem recalled in the original version (November
       5, 2017) has been
       removed.
\item Lemmas \ref{la:bh1} and \ref{la:aaa} have been added; they are used in the proof of
      Theorem \ref{th:2.2}. 
 \item Proof of Theorem \ref{th:2.2} has been
       corrected. Consequently, the theorems and their proofs in \S \ref{sec:2} remain valid for a closed linear operator $A$ if the
       dual operator $A'$ is well-defined. See Remark \ref{rem:31}. 
\item Remark \ref{rem:77} has been added.  
 \item Proof of \eqref{eq:101a} has been modified. I believe that it is
       a new proof.   
\end{enumerate}

\section*{Acknowledgement}
I would like to thank Professor Gerd Wachsmuth for pointing out that the
original proof of Theorem \ref{th:2.2} was incomplete. This work was supported by JST CREST Grant Number JPMJCR15D1, Japan and
by JSPS KAKENHI Grant Number 15H03635, Japan.

\bibliographystyle{plain}

\end{document}